\documentclass{article}

\usepackage[margin=2.5cm]{geometry} 

\usepackage[nolist]{acronym}

\usepackage{graphicx}
\usepackage[dvipsnames]{xcolor}
\usepackage{soul}
\usepackage{url}
\usepackage{subcaption}
\usepackage{algorithm2e}
\RestyleAlgo{ruled}
\makeatletter
\let\NAT@parse\undefined
\makeatother
\usepackage{hyperref}
\usepackage{tikz}
\usepackage{bbm}
\usepackage{epsfig}
 
\usepackage{amssymb}
\usepackage{amsthm}
\usepackage{cancel, comment, nicefrac}
\usepackage{mathtools}
\usepackage{comment}

\newtheorem{lemma}{Lemma}
\newtheorem{proposition}{Proposition}
\theoremstyle{definition}
\newtheorem{assumption}{Assumption}
\newtheorem{definition}{Definition}
\newtheorem{remark}{Remark}

\usepackage{booktabs}

\definecolor{matt}{rgb}{0.5 0 1}
\setstcolor{matt}

\definecolor{mg}{rgb}{0 0 1}
\setstcolor{mg}

\newcommand{\cA}{\mathcal{A}}

\newcommand{\cD}{\mathcal{D}}

\newcommand{\cF}{\mathcal{F}}

\newcommand{\cL}{\mathcal{L}}

\newcommand{\cR}{\mathcal{R}}

\newcommand{\cX}{\mathcal{X}}

\newcommand{\N}{\mathbb{N}}

\newcommand{\R}{\mathbb{R}}

\newcommand{\bF}{\mathbf{F}}

\newcommand{\bQ}{\mathbf{Q}}

\begin{document}

\begin{acronym}
\acro{RoA}{region of attraction}
\acro{PWA}{piecewise affine}
\acro{LP}{linear program}
\end{acronym}

\title{Data-driven approximation of regions of attraction via an LP-based selection of PWA Lyapunov functions}

\author{Oumayma Khattabi, Matteo Tacchi-Bénard, Martin Gulan, Sorin Olaru}

\maketitle

\begin{abstract}
This paper presents a method to approximate regions of attraction of unknown nonlinear dynamical systems from data. Assuming point-wise evaluations of the vector field and known Lipschitz bounds, a polyhedral uncertainty set of admissible dynamics is constructed. This uncertainty description enables the synthesis of a continuous \ac{PWA} Lyapunov candidate via a \ac{LP}, enforcing a robust decrease condition for all admissible vector fields. The approach allows certification of a  \ac{RoA} consistent with the available data. Numerical examples illustrate the effectiveness of the proposed method in extracting certified \ac{RoA}s from sparse data.

\textbf{Keywords :} \textit{Data-driven, PWA Lyapunov functions, Region of attraction, Lipschitz continuity, Uncertainty modeling.}
\end{abstract}

\section{Introduction}
In control theory, stability analysis is the theoretical foundation for system analysis and the construction of stability certificates. It is typically carried out using a mathematical model of the system dynamics, which is often unavailable or difficult to obtain for nonlinear systems. This limitation can be mitigated through data-driven methods, which have seen a significant increase in use as data and computational power have become widely available. Data-driven stability analysis is carried out through different approaches, among which the construction of Lyapunov functions~\cite{lyapunov1992} or the verification of barrier certificates~\cite{nagumo1942} are common. These rely on a range of computational tools, including neural networks and convex optimization frameworks, but certifying stability properties such as regions of attraction from data remains challenging.

Neural network-based methods have attracted increasing attention due to their learning and approximation capabilities. For instance, in~\cite{kolter2019}, a Lyapunov candidate is learned jointly with the dynamics to certify stability, whereas in~\cite {wang2024} it is learned jointly with the controller to ensure convergence. In~\cite{grune2021}, the approximation of a Lyapunov candidate is used to counteract the curse of dimensionality typically encountered in neural network-based approaches. Neural networks can also be used to learn invariant sets via learned Lyapunov functions for general reference-tracking control laws~\cite{kim2024}. Despite their flexibility, these approaches often lack explicit robustness guarantees and rely on sampled data, which can limit their ability to provide reliable certification.

Other approaches rely on convex optimization and polynomial methods. Sum-of-Squares programming is used in~\cite{martin2024} to compute polynomial representations of the system and certify stability through dissipativity constraints, and in~\cite{miller2024} to develop super-stabilizing controllers to counter measurement corruption and noise. Polynomial optimization is also used in~\cite{bramburger2024}, where, coupled with Koopman operators, dynamical system properties are certified, and in~\cite{zheng2024}, where a data-driven safe controller is developed with barrier constraints to guarantee stability and safety. Such methods rely on function approximations and structural assumptions, which may limit their applicability in fully data-driven settings.

Some early model-based works related to this paper include~\cite{brayton1979,brayton1980}, which proposed systematic procedures for constructing a Lyapunov function for nonlinear systems, and \cite{ohta1993}, which introduced computer-aided methods for the automatic generation of Lyapunov candidates. Other works, such as~\cite{kiendl1992,hmamed1994,loskot1998}, rely on vector norms as Lyapunov functions to derive stability conditions for linear systems, while stability margin evaluation for uncertain linear systems is studied in~\cite{gong1994,su1994}, highlighting robustness under bounded uncertainties. The above approaches provide strong theoretical guarantees but rely on explicit system models.

This paper uses an uncertainty set derived from data to induce a differential inclusion framework and construct a \ac{PWA} Lyapunov candidate. Related approaches using differential inclusions include~\cite{kamalapurkar2020}, which employs a set-valued map of the model for stability certification, and~\cite{dacs2023}, which develops a data-driven control barrier function using uncertainty estimation. Works constructing \ac{PWA} Lyapunov candidates include~\cite{julian1999}, which proposes an \ac{LP} that builds \ac{PWA} Lyapunov function over a polyhedral partition for a known nonlinear system, and~\cite{tacchi2025}, which formulates a second-order cone program for bounded unknown system with data. While approaches based on differential inclusions and polyhedral uncertainty bounds, as well as approaches constructing Lyapunov functions from data sets, have been explored in the literature, the proposed approach combines explicit uncertainty sets derived from data with \ac{PWA} Lyapunov functions in a fully data-driven setting, resulting in a tractable \ac{LP}-based formulation for stability certification.

The remainder of this paper is organized as follows: problem formulation and assumptions are presented in Section~\ref{sec:preliminaries}, followed by formulation of the uncertainty set in Section~\ref{sec:uncertainty_set}. Section~\ref{sec:PWA_Lyapunov} introduces the Lyapunov candidate, which, together with the uncertainty set, define the \ac{LP} in Section~\ref{sec:algorithm}. Numerical results illustrating certified \ac{RoA}s are given in Section~\ref{sec:numerical_examples}, followed by conclusions in Section~\ref{sec:conclusion}.

\vspace{0.4cm}
\noindent\textbf{Notation:}
Let $n\in\N$, $[n] \triangleq \{1,\cdots,n\}$. 
$I_n$ denotes the $n\times n$ identity matrix, and $\mathbf{1}_n$ the vector of ones in $\R^n$. 
For any $k\in[n]$, $e_k$ denotes the $k$-th canonical basis vector in $\R^n$. 
$\R_{>0}^n$ is the set of $n$-dimentional vectors of positive real components.
The symbol $\cdot$ denotes matrix product, while $\times$ denotes the Cartesian product of sets.
Let $\cX$ be a set in $\R^n$; $\text{int}(\cX)$ denotes its interior.
If $\cX$ is a polyhedron, then $|\cX|$ denotes the number of its vertices.
The symbol $\nabla$ denotes the gradient operator.
$|\cdot|$ denotes the componentwise absolute value, and $||\cdot||_p$ is the vector $p$-norm for $p\in\{1,2,\infty\}$.
$||\cdot||$ denotes the Euclidian norm $||\cdot||_2$

\section{Preliminaries and problem formulation}\label{sec:preliminaries}

Throughout this paper, we consider the dynamical system
\begin{equation}
    \dot{x} = f(x), \label{eq:ODE}
\end{equation}
where $f: \R^n\longrightarrow\R^n$, $n\in\N$, is an unknown vector field.

We seek to approximate the region of attraction (\ac{RoA}) of a stable equilibrium of the system within a convex, compact (\textit{i.e.}, bounded and closed) polyhedral set $\cX\subset\R^n$.

\subsection{Assumptions}
The following assumptions are adopted to formulate the problem and develop the proposed approach:
\begin{assumption}\label{assumption:equilibrium}
The system admits a unique locally asymptotically stable equilibrium in $\cX$, which, without loss of generality, is taken to be the origin.
\end{assumption}
\begin{assumption}\label{assumption:lipschitz}
The function $f$ is Lipschitz continuous on $\cX$, and a componentwise upper bound $M\in\mathbb{R}^n_{>0}$ on its Lipschitz constant is known.  Specifically, Lipschitz continuity is interpreted with respect to the $\cL_\infty$-norm:
\begin{equation*}
    |f(x)-f(y)| \leq  M ||x-y||_\infty, \quad \forall x,y\in\cX.
\end{equation*}
\end{assumption}

\begin{assumption}\label{assumption:dataset}
$f(.)$ is accessible for point-wise evaluation, either through simulation or measurements. This implies the availability of a data set $\cD_{N_d} = \{(x_i,f_i)\}_{i\in[N_d]}$, where $x_i\in\cX$, $f_i = f(x_i)$ and $N_d\in\N$.
\end{assumption}

\begin{assumption}\label{assumption:neighbourhood_A}
An arbitrarily small polyhedral set $\cA \subset \cX$ containing the equilibrium is available, and the behavior of the system in this set is safe.
\end{assumption}

\subsection{Problem formulation}
This work aims to certify the system's stability in a region around the equilibrium using data. The proposed method consists of constructing a \ac{PWA} Lyapunov function and sub-sequently extracting a certified \ac{RoA} from its level set.

The approximation of the \ac{RoA} is carried out in three main steps. The first step consists of identifying the uncertainty set, \textit{i.e.}, the set of all admissible pairs $(x,f(x))$ over $\cX$ consistent with the available data and the Lipschitz bound. The second step consists of partitioning the state space into a tessellation and, using the uncertainty set, constructing a \ac{PWA} Lyapunov candidate. The third step is to extract a valid \ac{RoA} with stability certificates. The remainder of the paper details these three steps in the following sections.

\section{Characterizing uncertainty in the dynamics from data information}
\label{sec:uncertainty_set}

Based on Assumptions~\ref{assumption:lipschitz} and~\ref{assumption:dataset}, the uncertainty set consistent with the data is defined as:
\begin{equation}
\label{eq:uncertainty_set}
    F_{\cD_{N_d}} = \{ (x,z) \in \cX \times \R^n| \, \forall i \in [N_d], | z - f_i | \leq  M|| x - x_i ||_{\scriptscriptstyle \infty} \}.
\end{equation}

This basic characterization can be refined to exploit locally active restrictions and obtain a global representation more suitable for analysis; a reformulation of the Lipschitz inequalities is in order. In particular, for a data point $(x_i,f_i)$, $i\in[N_d]$, and $x\in\cX\subset\R^n$, the Lipschitz inequality in the $\cL_\infty$-norm can be written in vector form:
\begin{gather}
    \left | \begin{bmatrix} f_1(x) \\ \vdots \\ f_n(x) \end{bmatrix} - \begin{bmatrix} f_{1,i} \\ \vdots \\ f_{n,i} \end{bmatrix} \right |  \leq M \left \| \begin{bmatrix} x_1 \\ \vdots \\ x_n \end{bmatrix} - \begin{bmatrix} x_{1,i} \\ \vdots \\ x_{n,i} \end{bmatrix} \right \|_\infty.
\end{gather}

On the one hand, the component-wise constraints suggest the possibility of treating the admissible uncertainties for each component of $f(x)$ independently. On the other hand, the absolute values of the relative displacement between the evaluation point and the data point introduce a number of alternatives that must be handled efficiently.

Since the objective is to determine bounds on each component of $f(x)$, we focus on a generic component indexed by $\kappa$. At this stage, the Lipschitz constant can be specified componentwise. Hence, we associate to the component $f_\kappa(x)$ its Lipschitz constant $M_\kappa$:
\begin{equation}
    \left| f_\kappa(x) - f_{\kappa,i} \right| \leq M_\kappa \underset{k\in[n]}{\max} \left ( |x_k - x_{k,i}| \right ). \label{eq:K-ineq}
\end{equation}

Let $\kappa' \in [n]$ be an index attaining the maximum in the right-hand side of~\eqref{eq:K-ineq}, \textit{i.e.},
\[ |x_{\kappa'} - x_{\kappa',i}| = \max_{k \in [n]} |x_k - x_{k,i}|. \]

Then the following inequalities hold:
\begin{gather*}
    \left| f_\kappa(x) - f_{\kappa,i} \right| \leq M_\kappa \left| x_{\kappa'} - x_{\kappa',i} \right|, \\
    \left| x_{k} - x_{k,i} \right| \leq | x_{\kappa'} - x_{\kappa',i} | \quad \forall k \in[n].
\end{gather*}

For each selection of indices $(\kappa, \kappa')$, two alternatives for the admissible uncertainty in the component $f_\kappa(x)$ arise naturally depending on the sign of $\left( x_{\kappa'} - x_{\kappa',i} \right)$:
\begin{gather}
    P^{(+)}_{\scriptscriptstyle i,\kappa,\kappa'} = \left\{ (x,z) \in \cX \times \R : \left | \begin{array}{c} x - x_i \\ z - f_{\kappa,i} \end{array} \right | \leq     \begin{bmatrix} \mathbf{1}_n \\ M_\kappa \end{bmatrix} \left( x_{\kappa'} - x_{\kappa',i} \right) \right\} , \label{eq:pcone}\\
    P^{(-)}_{\scriptscriptstyle i,\kappa,\kappa'} = \left\{ (x,z) \in \cX \times \R : \left | \begin{array}{c} x - x_i \\ z - f_{\kappa,i} \end{array} \right | \leq - \begin{bmatrix} \mathbf{1}_n \\ M_\kappa \end{bmatrix} \left( x_{\kappa'} - x_{\kappa',i} \right) \right\}. \label{eq:ncone}
\end{gather}

The uncertainty in each element of the right-hand side of the ordinary differential equation~\eqref{eq:ODE} is represented by a scalar variable $z \triangleq f_\kappa(x) $; the admissible pairs $(x,z)$ are implicitly characterized through the inequality constraints.

Each of the sets~\eqref{eq:pcone} and~\eqref{eq:ncone} defines the inequalities of a polyhedral cone in $\R^{n+1}$ induced by the $\cL_\infty$-norm Lipschitz constraint, and will henceforth be referred to simply as a cone. Moreover, these cones are symmetric with respect to the axis $x_{\kappa'}$, sharing the same vertex and direction. A formal definition of this set is a double cone\footnote{Two cones of the same vertex and direction, and an opposing sense.}, of vertex $(x_i,f_{\kappa, i})$, of axis along $x_{\kappa'}$, and of opening angle in $\cL_\infty$-norm $\eta_\kappa\in \R^{n+1}$:
\begin{equation*}
    \eta_\kappa \triangleq \begin{bmatrix} \mathbf{1}_n \\ M_\kappa \end{bmatrix}.
\end{equation*}

\begin{remark}
    The projection of each cone is the same for all elements of $f(\cdot)$. This projection is obtained by deleting the $\{n+1\}$-th inequality for both~\eqref{eq:pcone} and~\eqref{eq:ncone}, which is the only inequality dependent on $f_\kappa(\cdot)$ and $M_\kappa$.
\end{remark}

A polyhedral representation of the cones~\eqref{eq:pcone} and~\eqref{eq:ncone} is given by:
\begin{align}
    P^{(+)}_{i,\kappa,\kappa'} = \left\{ (x,z)\in\cX\times\R \mid A^{+} \cdot \begin{bmatrix} x \\ z \end{bmatrix} \leq b^{+} \right\},
    \label{eq:pcone_poly}\\
    P^{(-)}_{i,\kappa,\kappa'} = \left\{ (x,z)\in\cX\times\R \mid A^{-} \cdot \begin{bmatrix} x \\ z \end{bmatrix} \leq b^{-} \right\},
\end{align}
with
\begin{equation*}
    A^{+} = \begin{bmatrix}
        I_{n+1} \\ \hline -I_{n+1}
    \end{bmatrix} -
    \begin{bmatrix}
        \eta_\kappa \cdot e_{\kappa'}^\mathsf{T} \\
        \hline \eta_\kappa \cdot e_{\kappa'}^\mathsf{T}
    \end{bmatrix}, \,
    A^{-} = \begin{bmatrix}
        I_{n+1} \\ \hline -I_{n+1}
    \end{bmatrix} +
    \begin{bmatrix}
        \eta_\kappa \cdot e_{\kappa'}^\mathsf{T} \\
        \hline \eta_\kappa \cdot e_{\kappa'}^\mathsf{T}
    \end{bmatrix},
\end{equation*}
\begin{equation*}
    b^{+} = \begin{bmatrix}
        x_{1,i} - x_{\kappa',i} \\
        \vdots \\
        x_{n,i} - x_{\kappa',i} \\
        f_{\kappa,i} - M_\kappa x_{\kappa',i} \\
        \hline -x_{1,i}-x_{\kappa',i} \\
        \vdots \\
        -x_{n,i} - x_{\kappa',i} \\
        -f_{\kappa,i} - M_\kappa x_{\kappa',i}
    \end{bmatrix}, \
    b^{-} = \begin{bmatrix}
        x_{1,i} + x_{\kappa',i} \\
        \vdots \\
        x_{n,i} + x_{\kappa',i} \\
        f_{\kappa,i} + M_\kappa x_{\kappa',i} \\
        \hline -x_{1,i} + x_{\kappa',i} \\
        \vdots \\
        -x_{n,i} + x_{\kappa',i} \\
        -f_{\kappa,i} + M_\kappa x_{\kappa',i},
    \end{bmatrix},
\end{equation*}
where $e_{\kappa'} = [0, \cdots,\underset{\kappa' \textnormal{-th position}}{1}, 0, \cdots, 0]^\mathsf{T}\in \R^{n+1}$.

At this stage, the uncertainty sets of $f_\kappa(x)$ induced by the data set $\cD_{N_d}$ have been characterized by considering a single maximizer in the right-hand side of inequality~\eqref{eq:K-ineq}. To obtain the set of all admissible values of $f_\kappa(x)$, \textit{i.e.}, the uncertainty set of $f_\kappa(x)$ over $\cX$ given $(x_i,f_i)$, one must consider all indices attaining the maximum in the right-hand side of~\eqref{eq:K-ineq}. This set is given by the union of $2n$ disjoint cones:
\begin{equation}
    \mathbf{P}_{i,\kappa} = \bigcup_{l\in [n]} P^{(+)}_{i,\kappa,l} \cup P^{(-)}_{i,\kappa,l}. \label{eq:cone_multipoly}
\end{equation}

Figure~\ref{fig:multipoly} illustrates the union of polyhedra $\mathbf{P}_{i,\kappa}$ defined in~\eqref{eq:cone_multipoly} for $n=2$.
\begin{figure}[hbtp]
    \centering
    \includegraphics[width=0.6\linewidth]{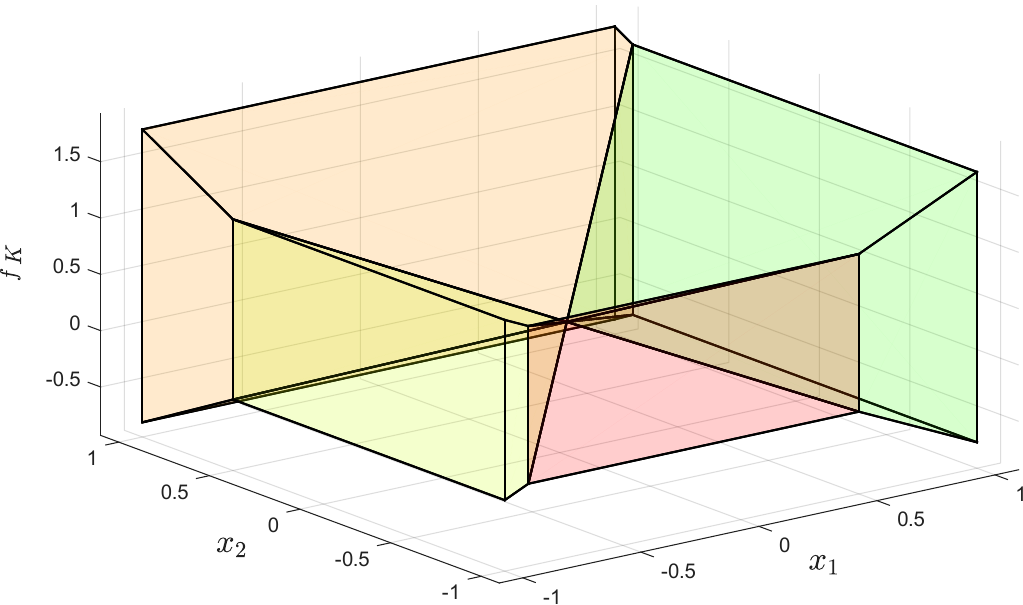}
    \caption{\centering\footnotesize Illustration of $\mathbf{P}_{i,\kappa}$ with $K\in\{1,2\}$ (in green, yellow, orange and red are $P^{(+)}_{i,\kappa,1}$, $P^{(-)}_{i,\kappa,1}$, $P^{(+)}_{i,\kappa,2}$ and $P^{(-)}_{i,\kappa,2}$ respectively).}
    \label{fig:multipoly}
\end{figure}

Using the full data set $\cD_{N_d}$, the uncertainty set of $f_\kappa(x)$ over $\cX$ is obtained as:
\begin{equation}
    Q_\kappa = \bigcap_{i\in [N_d]} \mathbf{P}_{i,\kappa}. \label{eq:uncertainty_set_kappa}
\end{equation}

Figure~\ref{fig:uncertainty_set_K} illustrates $Q_\kappa$ for $n=2$.
\begin{figure}[hbtp]
    \centering
    \includegraphics[width=0.6\linewidth]{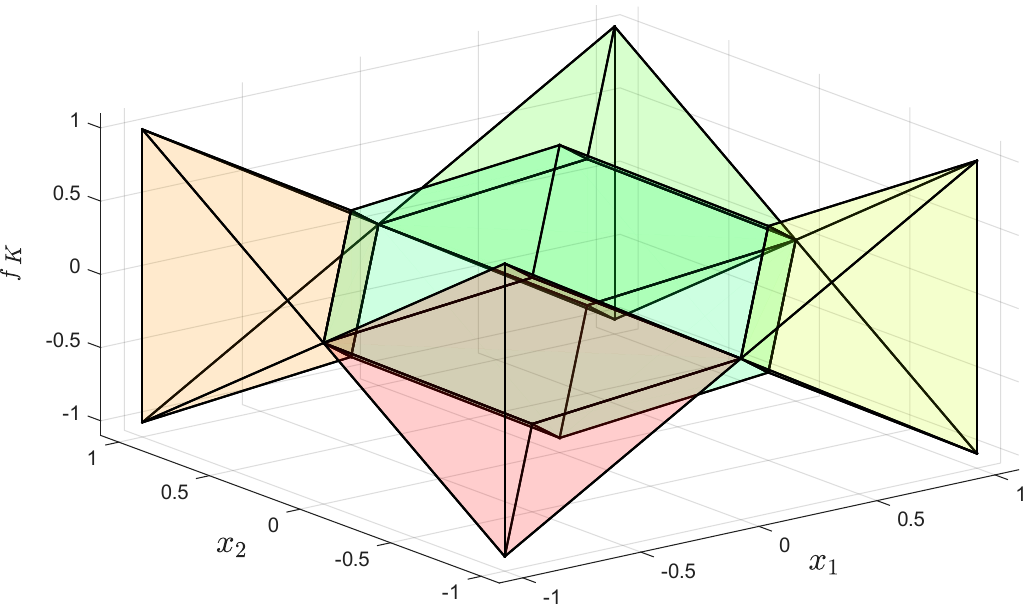}
    \caption{\footnotesize Illustration of $Q_\kappa$ with 4 data points.}
    \label{fig:uncertainty_set_K}
\end{figure}

We now analyze the geometric structure of the sets $Q_\kappa$ and their projections.

\begin{lemma}\label{lemma:Qk_union_polyhedra}
For any $\kappa \in [n]$, the set $Q_\kappa$ defined in \eqref{eq:uncertainty_set_kappa} is a finite union of polyhedral sets. Moreover, if $\cX$ is bounded, then $Q_\kappa$ is bounded.
\end{lemma}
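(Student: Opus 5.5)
The plan is to distribute the intersection over the unions. By \eqref{eq:cone_multipoly}, each $\mathbf{P}_{i,\kappa}$ is a union of the $2n$ sets $P^{(\sigma)}_{i,\kappa,l}$, with $\sigma\in\{+,-\}$ and $l\in[n]$. By \eqref{eq:pcone_poly} and its analogue for $P^{(-)}$, each of these sets is the intersection of $\cX\times\R$ with finitely many linear inequalities $A^{\pm}[x;z]\le b^{\pm}$. Since $\cX$ is a polyhedron, each $P^{(\sigma)}_{i,\kappa,l}$ is therefore a polyhedron in $\R^{n+1}$.

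Applying distributivity of intersection over union to \eqref{eq:uncertainty_set_kappa} gives
\begin{equation*}
Q_\kappa=\bigcap_{i\in[N_d]}\ \bigcup_{(\sigma,l)} P^{(\sigma)}_{i,\kappa,l}=\bigcup_{\phi}\ \bigcap_{i\in[N_d]} P^{(\phi_1(i))}_{i,\kappa,\phi_2(i)} .
\end{equation*}
Here $\phi=(\phi_1,\phi_2)$ ranges over all maps $[N_d]\to\{+,-\}\times[n]$, of which there are $(2n)^{N_d}$. Each inner intersection is a finite intersection of polyhedra, so it is a polyhedron (possibly empty). Stacking the inequality systems gives an explicit H-representation. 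Hence $Q_\kappa$ is a finite union of polyhedral sets, which proves the first claim.

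For boundedness, I will argue directly on the $z$-component. Take any $(x,z)\in Q_\kappa$ and fix one index $i$, say $i=1$. Then $(x,z)$ lies in some $P^{(\pm)}_{1,\kappa,l}$. The last row of the defining inequality gives
\begin{equation*}
|z-f_{\kappa,1}|\le \pm M_\kappa(x_l-x_{l,1})=M_\kappa|x_l-x_{l,1}|\le M_\kappa\operatorname{diam}_\infty(\cX),
\end{equation*}
where the equality holds because the first $n$ rows force $\pm(x_l-x_{l,1})\ge0$. Since $x\in\cX$ is bounded, the pair $(x,z)$ lies in the bounded set $\cX\times[f_{\kappa,1}-M_\kappa D,\,f_{\kappa,1}+M_\kappa D]$, with $D=\operatorname{diam}_\infty(\cX)<\infty$. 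So $Q_\kappa$ is bounded. This step needs $N_d\ge1$. If $N_d=0$, the intersection is empty-indexed, $Q_\kappa$ is unconstrained in $z$ and is not bounded, so I would note that at least one data point is assumed.

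The only potential obstacle is this edge case, together with checking that the sign conventions in $A^\pm,b^\pm$ match \eqref{eq:pcone}–\eqref{eq:ncone}. Otherwise the argument is routine.
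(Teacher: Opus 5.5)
Your proposal is correct and matches the paper's proof: both show each $P^{(\pm)}_{i,\kappa,l}$ is a polyhedron, distribute the finite intersection over the finite unions, and then bound $z$ via $|z-f_{\kappa,i}|\le M_\kappa\|x-x_i\|_\infty$ once $x$ ranges over a bounded $\cX$. Your explicit indexing by the maps $\phi$ and your remark that boundedness needs $N_d\ge 1$ are minor refinements that the paper leaves implicit.
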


\begin{proof}
For each $i\in [N_d]$ and $\kappa \in [n]$, the set $\mathbf{P}_{i,\kappa}$ is defined as a union of $2n$ sets of the form $P^{(+)}_{i,\kappa,l}$ and $P^{(-)}_{i,\kappa,l}$, with $l\in[n]$. By their polyhedral representation, each of these sets is a polyhedron in $\R^{n+1}$. Therefore, $\mathbf{P}_{i,\kappa}$ is a finite union of polyhedral sets.

Since $Q_\kappa$ is obtained as the intersection over $i\in[N_d]$ of the sets $\mathbf{P}_{i,\kappa}$, and each $\mathbf{P}_{i,\kappa}$ is a finite union of polyhedral sets, the distributivity of finite intersections over finite unions of sets implies that $Q_\kappa$ is itself a finite union of polyhedral sets.

If $\cX$ is bounded, then the $x$-component of every $(x,z)\in Q_\kappa$ is bounded. The inequalities defining the sets $P^{(+)}_{i,\kappa,l}$ and $P^{(-)}_{i,\kappa,l}$ then imply a bound on the scalar variable $z$, since
\begin{equation*}
|z-f_{\kappa,i}| \le M_\kappa \|x-x_i\|_\infty.
\end{equation*}
Hence $Q_\kappa$ is bounded.
\end{proof}

\begin{lemma}\label{lemma:projection_Qk}
For any $\kappa \in [n]$, the projections onto the state space of the regions associated with the construction of $Q_\kappa$ admit a common refinement that defines a finite polyhedral partition of $\cX$. Moreover, this partition is independent of $\kappa$.
\end{lemma}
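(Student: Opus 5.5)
The plan is to make the projections explicit, then refine them in a finite way. For fixed $i\in[N_d]$, $l\in[n]$ and sign $s\in\{+,-\}$, the projection onto $x$ of $P^{(s)}_{i,\kappa,l}$ is obtained by deleting the $(n+1)$-th row pair, as noted in the Remark. This gives
\[
R^{(s)}_{i,l} \triangleq \left\{ x\in\cX : |x_k - x_{k,i}| \le s\,(x_l - x_{l,i}),\ \forall k\in[n] \right\}.
\]
We must check that deleting the $z$-inequality really yields the projection. For every $x\in R^{(s)}_{i,l}$, the choice $z=f_{\kappa,i}$ satisfies the remaining inequality $|z-f_{\kappa,i}|\le M_\kappa s(x_l-x_{l,i})$, because the right-hand side is nonnegative. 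So the projection is exactly $R^{(s)}_{i,l}$. This set is a polyhedron, being the intersection of $\cX$ with finitely many half-spaces $\pm(x_k-x_{k,i}) \le s(x_l-x_{l,i})$. It involves neither $M_\kappa$ nor $f_{\kappa,i}$, so it does not depend on $\kappa$. This settles the independence claim at the level of the individual pieces.

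Next, for fixed $i$, the family $\{R^{(s)}_{i,l}\}_{l,s}$ covers $\cX$. Given $x\in\cX$, pick $l$ attaining $\max_k|x_k-x_{k,i}|$ and take $s$ equal to the sign of $x_l-x_{l,i}$; then $x\in R^{(s)}_{i,l}$. These $2n$ sets are the closed cells of the $\cL_\infty$ "pyramid" decomposition centred at $x_i$. Their interiors are pairwise disjoint, since an interior point has a strict unique maximiser with a strict sign. Hence for each $i$ they form a polyhedral partition $\Pi_i$ of $\cX$, in the usual sense that the cells cover $\cX$ and overlap only on boundaries.

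The common refinement is then built as follows. For each multi-index $\sigma=(l_i,s_i)_{i\in[N_d]}$, let $C_\sigma=\bigcap_{i} R^{(s_i)}_{i,l_i}$, and keep only those cells with nonempty interior. There are at most $(2n)^{N_d}$ of them, so the family is finite. Each $C_\sigma$ is a polyhedron, as a finite intersection of polyhedra.

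Three properties remain to be checked. First, the cells cover $\cX$: for each $i$, choose $\sigma_i$ as above, which covers all of $\cX$. Cells with empty interior can be discarded because the union of the full-dimensional cells is closed and contains the dense set where all maximisers are unique and strict. Second, the interiors are disjoint, since two different $\sigma$ differ in some $i$, and the interiors already separate in $\Pi_i$. Third, each cell refines every $\Pi_i$, because each cell is contained in a single cell of every $\Pi_i$. The collection is also exactly the set of projections of the polyhedra $\bigcap_i P^{(s_i)}_{i,\kappa,l_i}$ from Lemma~\ref{lemma:Qk_union_polyhedra}, whenever those are nonempty.

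I expect the main obstacle to be the precise meaning of "partition": cells share boundaries, so one must work modulo boundaries, and one must also handle the degenerate intersections and show that they lie in a measure-zero set. A further subtlety arises when some $\bigcap_i P^{(s_i)}_{i,\kappa,l_i}$ is empty in $(x,z)$, because the data are inconsistent with $M$, even though $C_\sigma$ is nonempty. I would handle this by defining the partition through the $C_\sigma$ alone, which is $\kappa$-independent by the first step.
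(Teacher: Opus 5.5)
Your proposal is correct and follows essentially the same route as the paper. For each data point, the enumeration of the maximising index and its sign gives $2n$ polyhedral cells that cover $\cX$ with pairwise disjoint interiors, and the common refinement $\bigcap_i R_{i,l_i}$ over all $i$ is finite, polyhedral, covers $\cX$, and is independent of $\kappa$ because the $x$-inequalities involve neither $M_\kappa$ nor $f_{\kappa,i}$. You add details the paper leaves implicit: that deleting the $z$-rows gives exactly the projection, why the interiors are disjoint, and the discarding of degenerate cells, which tacitly assumes $\cX$ is full-dimensional. Your concern about inconsistent data is moot, and your ``whenever nonempty'' qualifier is the wrong condition. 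Under Assumptions~\ref{assumption:lipschitz}--\ref{assumption:dataset}, the choice $z=f_\kappa(x)$ shows that every nonempty $C_\sigma$ is exactly the projection of $\bigcap_i P^{(s_i)}_{i,\kappa,l_i}$.
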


\begin{proof}
For each $i\in[N_d]$, the sets $P^{(+)}_{i,\kappa,l}$ and $P^{(-)}_{i,\kappa,l}$ are obtained by enumerating the possible indices attaining the maximum in
\begin{equation*}
\|x-x_i\|_\infty = \max_{k\in[n]} |x_k-x_{k,i}|.
\end{equation*}
For each $i\in[N_d]$, this enumeration induces a finite collection of polyhedral regions $\{R_{i,l}\}_{l \in L_i}$ in $\mathbb{R}^n$ whose union is $\mathbb{R}^n$ and whose interiors are pairwise disjoint.

Consider the common refinement of these collections over all $i\in[N_d]$, \textit{i.e.}, the family of nonempty sets
\begin{equation*}
R_{l_1,\dots,l_{N_d}} = \cX\cap\bigcap_{i\in[N_d]} R_{i,l_i}, \quad l_i \in L_i.
\end{equation*}
Each such set is polyhedral, being the intersection of finitely many polyhedral sets. Moreover, the union of all nonempty sets $R_{l_1,\dots,l_{N_d}}$ is $\cX$, and their interiors are pairwise disjoint. Hence, this refinement defines a finite polyhedral partition of $\cX$. Since the inequalities constraining $x$ do not depend on $\kappa$, the resulting partition of $\cX$ is independent of $\kappa$.
\end{proof}

\begin{lemma}\label{lemma:Qkx}
For any $\kappa \in [n]$ and any $x \in \cX$, define
\begin{equation*}
\bQ_\kappa(x)\triangleq\{ z \in \R \mid (x,z)\in Q_\kappa \}.
\end{equation*}
Then $\bQ_\kappa(x)$ is a polyhedral set in $\mathbb{R}$; in particular, it is a bounded interval.
\end{lemma}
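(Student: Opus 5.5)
The plan is to avoid working through the union-of-cones description and instead show that the fibre $\bQ_\kappa(x)$ equals the set of $z$ satisfying all the scalar Lipschitz inequalities at once. Fix $\kappa\in[n]$ and $x\in\cX$. For a single data index $i$, I will show that
\begin{equation*}
\{z\in\R \mid (x,z)\in\mathbf{P}_{i,\kappa}\} = \{z \in \R \mid |z-f_{\kappa,i}|\le M_\kappa\|x-x_i\|_\infty\}.
\end{equation*}
For the inclusion ``$\supseteq$'', take $l$ attaining the maximum in $\|x-x_i\|_\infty=\max_k|x_k-x_{k,i}|$. Choose the sign $\pm$ to match the sign of $x_l-x_{l,i}$, taking $+$ if it is zero. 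Then $(x,z)$ satisfies every inequality of $P^{(\pm)}_{i,\kappa,l}$ in \eqref{eq:pcone}--\eqref{eq:ncone}: the first $n$ rows hold by maximality of $l$, and the last row is exactly the Lipschitz bound. For ``$\subseteq$'', if $(x,z)\in P^{(\pm)}_{i,\kappa,l}$ for some $l$, the first $n$ rows force $|x_l-x_{l,i}|=\pm(x_l-x_{l,i})\ge \max_k|x_k-x_{k,i}|$. So the right-hand side of the last row equals $M_\kappa\|x-x_i\|_\infty$, and the inequality follows.

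Intersecting over $i\in[N_d]$ using \eqref{eq:uncertainty_set_kappa} then gives
\begin{equation*}
\bQ_\kappa(x)=\bigcap_{i\in[N_d]}\left[f_{\kappa,i}-M_\kappa\|x-x_i\|_\infty,\; f_{\kappa,i}+M_\kappa\|x-x_i\|_\infty\right] = \left[\max_i \ell_i(x),\, \min_i u_i(x)\right],
\end{equation*}
where $\ell_i$ and $u_i$ denote the lower and upper endpoints. This is a finite intersection of closed bounded intervals, so it is a closed, bounded, convex subset of $\R$. Equivalently, it is a polyhedron defined by the $2N_d$ linear inequalities in $z$, which proves the claim. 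The set may a priori be empty; I will note that for the true dynamics, Assumptions~\ref{assumption:lipschitz} and~\ref{assumption:dataset} give $f_\kappa(x)\in\bQ_\kappa(x)$, so it is a nonempty interval.

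The main obstacle is the first step. Lemma~\ref{lemma:Qk_union_polyhedra} only gives $Q_\kappa$ as a union of polyhedra, and a slice of a union need not be convex. I handle this by showing that, once $x$ is fixed, the union over $l$ and over the sign collapses to a single interval, since every active cone imposes the same bound. The tie case, where several $l$ or a zero displacement attain the maximum, needs a brief check that all the corresponding cones give the identical slice.
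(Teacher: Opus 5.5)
Your proof is correct and follows the same route as the paper: each data point contributes the interval $[f_{\kappa,i}-M_\kappa\|x-x_i\|_\infty,\, f_{\kappa,i}+M_\kappa\|x-x_i\|_\infty]$, and $\bQ_\kappa(x)$ is the finite intersection of these intervals. You also explicitly verify that the fibre of the union of cones $\mathbf{P}_{i,\kappa}$ collapses to this interval, and you add the nonemptiness remark $f_\kappa(x)\in\bQ_\kappa(x)$; both fill in steps the paper only asserts, and your $\subseteq$ argument already settles the tie case, since any cone containing $(x,z)$ forces its right-hand side to equal $M_\kappa\|x-x_i\|_\infty$.
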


\begin{proof}
For any fixed $x\in\cX$ and any data point $(x_i,f_i)$, the Lipschitz inequality gives $|z-f_{\kappa,i}| \le M_\kappa \|x-x_i\|_\infty$, which is equivalent to
\begin{equation*}
f_{\kappa,i}-M_\kappa \|x-x_i\|_\infty \le z \le f_{\kappa,i}+M_\kappa \|x-x_i\|_\infty.
\end{equation*}
Thus, for fixed $x$, each data point defines a bounded interval of admissible values for $z$. Since $Q_\kappa$ is obtained by enforcing these inequalities for all $i\in[N_d]$, $\bQ_\kappa(x)$ is the intersection of finitely many bounded intervals. Therefore, $\bQ_\kappa(x)$ is itself a bounded interval in $\mathbb{R}$.
\end{proof}

As a consequence of Lemma~\ref{lemma:Qkx}, for any $\kappa \in [n]$ and any $x\in\cX$, there exist scalars
$f_\kappa^{\min}(x)$ and $f_\kappa^{\max}(x)$ such that
\begin{equation*}
    \bQ_\kappa(x)= [f_\kappa^{\min}(x),\, f_\kappa^{\max}(x)].
\end{equation*}
Therefore, the uncertainty induced by the data set can be described by the set-valued map
\begin{equation*}
\bF:\cX\rightrightarrows\mathbb{R}^n, \quad \bF(x)=\bQ_1(x)\times\cdots\times \bQ_n(x),
\end{equation*}
that is,
\begin{equation*}
\bF(x)= [f_1^{\min}(x),f_1^{\max}(x)] \times \cdots \times [f_n^{\min}(x),f_n^{\max}(x)].
\end{equation*}

From the componentwise characterization above, the global uncertainty set \eqref{eq:uncertainty_set} can equivalently be written as
\begin{equation}
\label{eq:uncertainty_set2}
F_{\mathcal{D}_{N_d}} = \{(x,z)\in\mathcal{X}\times\mathbb{R}^n \mid z \in \bF(x)\},
\end{equation}
\textit{i.e.}, the graph of all vector fields consistent with the data set
$\mathcal{D}_{N_d}$ over $\mathcal{X}$.

\begin{proposition}\label{prop:differential_inclusion}
Given the data set $\cD_{N_d}$ and Assumptions~\ref{assumption:lipschitz}--\ref{assumption:dataset}, the uncertainty in the vector field induced by the data set can be characterized by the differential inclusion
\begin{equation*}
\dot x \in \bF(x), \quad x\in\cX,
\end{equation*}
where $\bF:\cX\rightrightarrows\mathbb{R}^n$ is the set-valued map defined by $\bF(x)=\bQ_1(x)\times\cdots\times \bQ_n(x)$. Moreover, the graph of $\bF$ is represented by a finite union of polyhedral sets.
\end{proposition}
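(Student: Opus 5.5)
The argument has two parts. First we show that every trajectory of \eqref{eq:ODE} in $\cX$ is a solution of the inclusion. Then we show that the graph of $\bF$ is a finite union of polyhedra, built from the componentwise sets $Q_\kappa$ of Lemma~\ref{lemma:Qk_union_polyhedra}.

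\emph{Step 1 (the true dynamics satisfy the inclusion).} Fix $x\in\cX$. By Assumption~\ref{assumption:dataset}, $f_i=f(x_i)$ with $x_i\in\cX$. Assumption~\ref{assumption:lipschitz}, read componentwise, then gives $|f_\kappa(x)-f_{\kappa,i}|\le M_\kappa\|x-x_i\|_\infty$ for every $i\in[N_d]$ and $\kappa\in[n]$. Choose an index $\kappa'$ attaining $\|x-x_i\|_\infty$, and note the sign of $x_{\kappa'}-x_{\kappa',i}$. Then $(x,f_\kappa(x))$ lies in $P^{(+)}_{i,\kappa,\kappa'}$ or in $P^{(-)}_{i,\kappa,\kappa'}$, hence in $\mathbf{P}_{i,\kappa}$. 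Intersecting over $i$ gives $(x,f_\kappa(x))\in Q_\kappa$, that is, $f_\kappa(x)\in\bQ_\kappa(x)$ for every $\kappa$. Therefore $f(x)\in\bF(x)$. Any solution of $\dot x=f(x)$ staying in $\cX$ is thus a solution of $\dot x\in\bF(x)$. The same reasoning applies to any other Lipschitz field consistent with the data, so $\bF$ encloses the whole admissible class. Together with Lemma~\ref{lemma:Qkx}, which makes each $\bQ_\kappa(x)$ a nonempty bounded interval, this also shows that \eqref{eq:uncertainty_set2} agrees with \eqref{eq:uncertainty_set}.

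\emph{Step 2 (polyhedral structure of the graph).} The graph of $\bF$ is $\{(x,z):(x,z_\kappa)\in Q_\kappa\ \forall\kappa\}$. We would write it as $\bigcap_{\kappa\in[n]}\{(x,z)\in\cX\times\R^n : (x,z_\kappa)\in Q_\kappa\}$. By Lemma~\ref{lemma:Qk_union_polyhedra}, $Q_\kappa=\bigcup_{j\in J_\kappa} S_{\kappa,j}$ with each $S_{\kappa,j}\subset\R^{n+1}$ a polyhedron. Lifting each $S_{\kappa,j}$ to $\R^{2n}$, by adding no constraints on the coordinates $z_{\kappa''}$ with $\kappa''\neq\kappa$, gives a polyhedron again. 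Applying distributivity of intersection over finite unions yields
\begin{equation*}
\mathrm{graph}(\bF)=\bigcup_{(j_1,\dots,j_n)\in J_1\times\cdots\times J_n}\ \bigcap_{\kappa\in[n]}\widetilde S_{\kappa,j_\kappa},
\end{equation*}
which is a finite union of polyhedra. Lemma~\ref{lemma:projection_Qk} gives a tidier form of the same union. Over each cell $R$ of the common, $\kappa$-independent partition of $\cX$, the maximizing indices and signs are fixed. On such a cell each $\bQ_\kappa(x)$ is cut out by finitely many affine inequalities in $(x,z_\kappa)$, so the piece of the graph above $R$ is a single polyhedron, essentially $\{(x,z):x\in R,\ z\in\prod_\kappa\bQ_\kappa(x)\}$.

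The main obstacle is bookkeeping rather than analysis. One must check that the cones $P^{(\pm)}_{i,\kappa,l}$ together exactly reproduce the Lipschitz constraint, with no spurious restriction from the inequalities $|x_k-x_{k,i}|\le|x_{\kappa'}-x_{\kappa',i}|$. This holds because some index always attains the maximum, so the union over $l$ and over both signs covers every $x$. One must also handle points lying on shared faces of the cones, where several selections are valid simultaneously. This causes no difficulty, since only the union matters.
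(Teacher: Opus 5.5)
Your proof is correct. It follows the paper's two-part outline: first admissible fields are contained in $\bF$, then the graph is polyhedral via Lemma~\ref{lemma:Qk_union_polyhedra}. Both parts are argued differently, though.

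\textbf{Containment.} The paper takes it from the construction that the hyperbox $\bF(x)$ contains all admissible values. You verify it, by placing $(x,f_\kappa(x))$ in the cone $P^{(\pm)}_{i,\kappa,\kappa'}$ selected by a maximizing index and its sign.

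\textbf{Polyhedrality.} The paper glues the sets $Q_\kappa$ together through the $\kappa$-independent partition of Lemma~\ref{lemma:projection_Qk}. Your main argument lifts each $Q_\kappa$ to a cylinder in $\R^{2n}$ and distributes the intersection over $\kappa$ across the finite unions. This needs only Lemma~\ref{lemma:Qk_union_polyhedra}, so Lemma~\ref{lemma:projection_Qk} becomes unnecessary for this claim.

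\textbf{Trade-off.} Your route is self-contained and rigorous, but it may produce empty or redundant pieces. The paper's route yields one polyhedron per cell of a common partition of $\cX$, which is the structure later used to project vertices of $\mathrm{graph}(\bF)$ into a tessellation. You also sketch this route as your ``tidier form'', correctly noting that each $\|x-x_i\|_\infty$ is affine on a cell.

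\textbf{One tightening.} You say Step~1 shows that \eqref{eq:uncertainty_set2} agrees with \eqref{eq:uncertainty_set}. That needs two more things. First, run the argument with an arbitrary $z$ in place of $f(x)$. Second, add the easy reverse inclusion: on $P^{(\pm)}_{i,\kappa,l}$ the $x$-constraints force $\pm(x_l-x_{l,i})=\|x-x_i\|_\infty$, so the $z$-constraint is exactly the Lipschitz bound. Neither is needed for the proposition itself.
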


\begin{proof}
First, by Lemma \ref{lemma:Qk_union_polyhedra}, each set $Q_\kappa$ is a finite union of polyhedra in $\mathbb{R}^{n+1}$. By Lemma \ref{lemma:Qkx}, the set $\bQ_\kappa(x)$ is a bounded interval for every $x\in\cX$. Hence, for each $x\in\cX$, the set $\bF(x)=\bQ_1(x)\times\cdots\times \bQ_n(x)$ is a  hyperbox in $\mathbb{R}^n$ containing all admissible values of the vector field at $x$. Therefore, the uncertainty induced by the data set is described by the differential inclusion $\dot{x}\in\bF(x)$.

Moreover, by \eqref{eq:uncertainty_set2}, the graph of $\bF$ coincides with the global uncertainty set $F_{\cD_{N_d}}$, i.e. $\mathrm{graph}(\bF) = F_{\cD_{N_d}}$. Since each $Q_\kappa$ is a finite union of polyhedra and the projections onto the state space coincide for all $\kappa$ by Lemma \ref{lemma:projection_Qk}, the graph of $\bF$ is represented by a finite union of polyhedral sets.
\end{proof}

The set-valued map $\bF(x)$ provides, for each $x\in\cX$, a hyperbox in $\mathbb{R}^n$ containing all admissible values of the vector field consistent with the data. In view of subsequent stability analysis, it is convenient to exploit this polyhedral structure through its extreme points.

In particular, the hyperbox $\bF(x)$ is fully characterized by its set of vertices,
defined as follows:
\begin{equation}
\cF_v(x) = \left\{ z \in \R^n \mid z_k \in \{f_k^{\min}(x), f_k^{\max}(x)\}, \forall k \in [n] \right\}.
\end{equation}

\section{PWA Lyapunov candidate}\label{sec:PWA_Lyapunov}
This section presents the method used to derive a Lyapunov candidate function for the uncertainty set consistent with a data set $\cD_{N_d}$. When defining a Lyapunov candidate, the parametrization must satisfy at least two basic principles:
\begin{itemize}
    \item be a universal approximator of a generic nonlinear con-tinuous Lyapunov function;
    \item have manageable complexity in the number of degrees of freedom for the approximation.
\end{itemize}

While several alternatives exist, we focus here on \ac{PWA} functions:
\begin{definition}\label{def:PWA}
Given a polyhedral partition of $\cX\subset\R^n$, a function $g:\cX\rightarrow\R$ is called a \ac{PWA} function if
\begin{gather*}
    \cX = \bigcup_{j\in [N_c]} Y_j, \ \ \mathrm{int}(Y_j) \cap \mathrm{int}(Y_{j'}) = \varnothing, \,\forall j\neq j',\\
    g(x) =a_j^\mathsf{T}x + b_j, \ \ \forall x \in Y_j,
\end{gather*}
where $a_j\in\R^n$, $b_j\in\R$, and $N_c\in\N$.
\end{definition}

These functions are continuous and inherit the polyhedral geometry of the sets $Y_j$, while satisfying the two desiderata stated above. Moreover, \ac{PWA} functions involve a partition whose structure is closely related to the polyhedral partition induced by the differential inclusion describing the uncertainty in Section~\ref{sec:uncertainty_set}.

\subsection{State-space tessellation as a design parameter}

A \ac{PWA} function, according to Definition~\ref{def:PWA}, relies on a partition and associated coefficients $(a_j,b_j)$. In this work, we restrict attention to polyhedral tessellations of the state space, which provide a structured class of partitions suitable for our construction.
For Lyapunov-based certification, we treat the tessellation as a design degree of freedom and enforce Lyapunov-related properties by optimizing the coefficients $(a_j,b_j)$. 

Accordingly, we propose to construct the tessellation by selecting, in an initial phase of the procedure, its vertices $\{v_u\}_{u\in [N_v]}$. The rationale for this choice is the availability of points obtained as projections of the vertices of $\mathrm{graph}(\bF)$ onto the state space.
This choice serves two purposes. First, it adapts the tessellation complexity to the complexity of the dynamics using information encoded in the uncertainty set. Second, it ensures consistency between the state-space partition and the uncertainty set, which can be exploited in the certification procedure.

The tessellation is constructed from geometric features of $\mathrm{graph}(\bF)$. In particular, the vertices $\{v_u\}_{u\in [N_v]}$ are obtained by projecting the vertices of $\mathrm{graph}(\bF)$ onto the state space. This ensures that $\{v_u\}_{u\in [N_v]}$ capture the extreme points of the uncertainty set. Ideally, the resulting tessellation forms a subpartition of the partition induced by the projection of $\mathrm{graph}(\bF)$\footnote{In the numerical section, this structural constraint is relaxed.}.

\subsection{Lyapunov candidate}

Once the tessellation $\{Y_j\}_{j\in [N_c]}$ is imposed, the \ac{PWA} Lyapunov candidate $V:\cX\subset\R^n\rightarrow\R$ is defined as:
\begin{equation}\label{eq:Lyapunov}
    V(x) = a_j^\mathsf{T}x + b_j, \ \ \forall j\in [N_c], \ \forall x \in Y_j \subset \cX,
\end{equation}
where the coefficients are constrained to verify that:
\begin{itemize}
    \item $V$ is positive:
\begin{equation}\label{eq:positivity}
    V(x)>0,\; \forall x\in\cX\setminus\{0\},
\end{equation}
    \item $V$ is continuous:
\begin{equation}\label{eq:continuity}
    a_j^\mathsf{T}x + b_j = a_{j'}^\mathsf{T}x + b_{j'}, \; \forall j,j'\in [N_c],\; \forall x \in Y_j \cap Y_{j'},
\end{equation}
    \item and for each cell $Y_j$, the gradient of $V$ satisfies:
\begin{equation}\label{eq:decrease}
    \nabla V(x)^\mathsf{T} f(x) = a_j^\mathsf{T}f(x) < 0,\, \forall j\in [N_c],\, \forall x \in \mathrm{int}(Y_j)  \setminus  \{0\}.
\end{equation}
\end{itemize}

For $\alpha\in\R$, we define the sublevel set $\cL^V_\alpha$ of $V$ as:
\begin{equation*}
    \cL^V_\alpha \triangleq \{x \in \cX \mid V(x) \leq \alpha\}.
\end{equation*}

\begin{lemma}\label{lemma:levelset}
If there exists $V(\cdot)$ satisfying~\eqref{eq:positivity}--\eqref{eq:decrease} for all admissible $f(\cdot)\in \bF(\cdot)$, then, for any $\alpha\in\R$ with $\cL^V_\alpha \subseteq \cX$ and any $x_0\in \cL^V_\alpha$, either $x(t,x_0)\in \cL^V_\alpha$ for all $t\ge 0$, or there exists $t_e<\infty$ such that $x(t_e,x_0)\notin \cX$.
\end{lemma}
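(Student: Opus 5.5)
We argue by contradiction along the trajectory, using that $V$ is nonincreasing along solutions that remain in $\cX$. Fix $\alpha$ with $\cL^V_\alpha\subseteq\cX$ and $x_0\in\cL^V_\alpha$, and let $x(t)=x(t,x_0)$ be the solution of~\eqref{eq:ODE}. By Assumption~\ref{assumption:lipschitz} and Picard--Lindelöf it exists and is unique as long as it stays in $\cX$. Since $f(x)\in\bF(x)$ for all $x\in\cX$ by Proposition~\ref{prop:differential_inclusion}, the hypothesis lets us use the decrease condition~\eqref{eq:decrease} with the true $f$. Suppose the trajectory never leaves $\cX$, so it is defined on $[0,\infty)$ and stays in $\cX$. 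It then remains to show that $x(t)\in\cL^V_\alpha$ for all $t\ge0$.

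The central step is to show that $t\mapsto V(x(t))$ is nonincreasing on $[0,\infty)$. It is locally Lipschitz, since $V$ is continuous PWA by~\eqref{eq:continuity} and hence Lipschitz on the compact set $\cX$, and $x(\cdot)$ is $C^1$. So it is absolutely continuous and differentiable almost everywhere. At a time $t$ where $x(t)\in\mathrm{int}(Y_j)\setminus\{0\}$, we have $\frac{d}{dt}V(x(t))=a_j^\mathsf{T}f(x(t))<0$. If $x(t)=0$ at some time, then $x\equiv 0$ afterwards by uniqueness, because $0$ is an equilibrium by Assumption~\ref{assumption:equilibrium}, and $V$ is constant from then on.

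\textbf{Main obstacle.} The difficulty is the set of times $T$ at which $x(t)$ lies on a cell boundary $Y_j\cap Y_{j'}$, where~\eqref{eq:decrease} gives no direct information. My first approach is a standard a.e.\ argument. At almost every $t\in T$ where both $x$ and $V\circ x$ are differentiable, the derivative $\frac{d}{dt}V(x(t))$ equals a one-sided directional derivative $V'(x(t);f(x(t)))$. This equals $a_j^\mathsf{T}f(x(t))$ for some cell $Y_j$ containing $x(t)+sf(x(t))$ for small $s>0$. Here I expect to need continuity of $f$ to extend the strict inequality on $\mathrm{int}(Y_j)$ to a nonstrict one, $a_j^\mathsf{T}f(x)\le0$, on all of $Y_j$ (closure of the interior, since cells are full-dimensional polyhedra). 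This yields $\frac{d}{dt}V(x(t))\le0$ at all differentiability points. Integrating the a.e.\ derivative then gives $V(x(t))\le V(x_0)\le\alpha$ for all $t\ge0$.

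Combining this with $x(t)\in\cX$ gives $x(t)\in\cL^V_\alpha$ by definition of the sublevel set. This is the first alternative; otherwise some $t_e<\infty$ has $x(t_e)\notin\cX$, which is the second. Because $\cL^V_\alpha\subseteq\cX$, one can also note that a continuity/first-exit-time argument shows the trajectory leaves $\cL^V_\alpha$ only if $V$ increases past $\alpha$ while still in $\cX$, which the monotonicity forbids. I would include this as a remark clarifying that the two alternatives are exhaustive.
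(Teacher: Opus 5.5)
Your proof is correct and follows the paper's route: $V$ is nonincreasing along the true trajectory while it stays in $\cX$, which gives the dichotomy. It is in fact more careful than the paper, which simply asserts that $V$ is a Lyapunov function for the inclusion, whereas your directional-derivative argument (using continuity of $f$ to get $a_j^\mathsf{T}f(x)\le 0$ on all of $Y_j$) is what actually handles the times the trajectory spends on cell boundaries.

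Drop the closing remark, however, because it is false. The inclusion $\cL^V_\alpha\subseteq\cX$ holds by definition, and a trajectory can leave $\cL^V_\alpha$ by crossing $\partial\cX$ at a point where $V<\alpha$, without $V$ ever exceeding $\alpha$. For example, take $\cX=[-1,1]^2$, $V=|x_1|+10|x_2|$, $\dot x_1=-40x_1$, $\dot x_2=3x_1-x_2$, $\alpha=11$ and $x_0=(0.5,1)$: then $V(x_0)=10.5<\alpha$ but $\dot x_2(0)=0.5>0$, so the trajectory exits $\cX$ immediately. This is exactly why the second alternative is needed; your remark would require $\cL^V_\alpha\subseteq\mathrm{int}(\cX)$.
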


\begin{proof}
By Proposition~\ref{prop:differential_inclusion}, admissible dynamics satisfy $\dot x \in \bF(x)$. Since $V$ satisfies~\eqref{eq:positivity}--\eqref{eq:decrease} for all admissible $f(\cdot)\in \bF(\cdot)$, it is a Lyapunov function for this inclusion on $\cX$. Thus, for any $x_0\in \cL^V_\alpha$, as long as $x(t,x_0)\in \cX$, one has $V(x(t,x_0))\le V(x_0)\le \alpha$, and therefore $x(t,x_0)\in \cL^V_\alpha$. Hence, trajectories cannot leave $\cL^V_\alpha$ while remaining in $\cX$. Consequently, either $x(t,x_0)\in \cL^V_\alpha$ for all $t\ge 0$, or the trajectory exits $\cX$. In the latter case, continuity of $x(\cdot,x_0)$ implies the existence of a finite exit time $t_e<\infty$ such that $x(t_e,x_0)\notin \cX$.
\end{proof}

\begin{proposition}\label{prop:levelset}
Let $V(\cdot)$ satisfy~\eqref{eq:positivity}--\eqref{eq:decrease} for all admissible $f(\cdot)\in \bF(\cdot)$, and let $\cL^V_\alpha$ be a sublevel set for some $\alpha\in\R$, with $\cL^V_\alpha \subseteq \cX$. If $\cR \subset \cX$ is such that $\cR\cap \cL^V_\alpha$ is a non-empty connected set containing the equilibrium, then $\cR\cap \cL^V_\alpha$ defines a certified \ac{RoA} consistent with the data.
\end{proposition}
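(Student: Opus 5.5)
We will show that every trajectory starting in $\cS \triangleq \cR\cap\cL^V_\alpha$ stays in $\cL^V_\alpha\subseteq\cX$ for all $t\ge0$ and converges to the origin. This holds for every vector field consistent with the data, and in particular for the true $f$. Throughout we use $f\in\bF$: by construction $(x,f(x))\in F_{\cD_{N_d}}$ for all $x\in\cX$, since the true $f$ satisfies the Lipschitz inequalities with respect to every data point. By \eqref{eq:uncertainty_set2} and Proposition~\ref{prop:differential_inclusion}, the true dynamics is therefore one of the admissible vector fields.

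\textbf{Step 1 (invariance).} Fix $x_0\in\cS\subseteq\cL^V_\alpha$ and apply Lemma~\ref{lemma:levelset}. Either the trajectory remains in $\cL^V_\alpha$ forever, or it leaves $\cX$ at some finite time. We must rule out the second alternative. Suppose $t_e$ were the first time with $x(t_e,x_0)\notin \text{int}\,\cL^V_\alpha$ relative to $\cX$. On $[0,t_e]$ the trajectory lies in $\cX$, so the decrease condition \eqref{eq:decrease} gives $V(x(t_e))\le V(x_0)\le\alpha$. By continuity of $V$, the trajectory is then still in $\cL^V_\alpha\subseteq\cX$. Hence the set of times during which the trajectory stays in $\cL^V_\alpha$ is both open and closed in $[0,\infty)$, and it equals $[0,\infty)$.

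The delicate point is that \eqref{eq:decrease} holds only on cell interiors. To conclude that $t\mapsto V(x(t))$ is nonincreasing, we argue as follows. $V$ is continuous and PWA, hence Lipschitz. Therefore $V\circ x$ is absolutely continuous. For a.e.\ $t$, its derivative is a directional derivative of $V$ along $f(x(t))$. When $x(t)$ lies on a facet, the trajectory either crosses the facet transversally, which happens only on a measure-zero set of times, or slides along it. In the sliding case we plan to pass to the limit of the strict inequalities $a_j^\mathsf{T}f<0$ from the adjacent cells, which yields $a_j^\mathsf{T}f\le0$ by continuity of $f$. This is the main obstacle, and we intend to resolve it with this Clarke-type nonsmooth argument.

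\textbf{Step 2 (convergence).} Because the trajectory stays in the compact set $\cL^V_\alpha$, LaSalle's invariance principle applies, with $V$ as a nonincreasing, bounded-below function. The $\omega$-limit set is nonempty, compact and invariant, and $V$ is constant on it. Any point of it other than the origin would have a neighbourhood on which $V$ strictly decreases along the flow, which contradicts invariance. Hence the $\omega$-limit set is $\{0\}$, and $x(t,x_0)\to0$. Combined with Assumption~\ref{assumption:equilibrium} and the safe behaviour near the equilibrium from Assumption~\ref{assumption:neighbourhood_A}, this gives attraction to the origin.

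\textbf{Step 3 (role of connectivity).} Finally, $\cS$ is a connected set containing the equilibrium, and every point of $\cS$ is attracted to the origin. Therefore $\cS$ is contained in the connected component of the true \ac{RoA} that contains the origin, so it is a certified inner approximation of the \ac{RoA}. Since this certificate was obtained for all $f\in\bF$, it is consistent with every vector field compatible with the data.
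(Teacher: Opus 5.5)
Your route is the paper's route. You get forward invariance of $\cL^V_\alpha$ from Lemma~\ref{lemma:levelset} plus the hypothesis $\cL^V_\alpha\subseteq\cX$, and then convergence from a standard Lyapunov/LaSalle argument. Your Step~1 treatment of facets is sound, and simpler than you make it: continuity of $f$ already gives $a_j^\mathsf{T}f(x)\le 0$ for all $x\in Y_j$, so the one-sided derivative of $V$ along $f$ is nonpositive everywhere.

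The gap is in Step~2, at ``any point of [the $\omega$-limit set] other than the origin would have a neighbourhood on which $V$ strictly decreases along the flow''. Condition~\eqref{eq:decrease} is strict only on the open cells. On the skeleton $\bigcup_j\partial Y_j$ you have only the non-strict inequality you derived yourself in Step~1, and equality really occurs. Near a facet $\{x_2=0\}$, take $V=c-x_2$ on the upper cell, $V=c+x_2$ on the lower one, and $f(x)=(1,x_2)^\mathsf{T}$. Then \eqref{eq:decrease} holds strictly on both open cells, yet the solution through any facet point slides along the facet with $V\equiv c$. LaSalle therefore only places the $\omega$-limit set inside the largest invariant set on which $V$ is constant along solutions. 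Nothing in your argument excludes orbits of that set that lie entirely in the skeleton.

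You need an extra ingredient to close this step. The cleanest option is a decrease condition on closed cells: $a_j^\mathsf{T}f(x)<0$ for every $x\neq 0$ and every $j$ with $x\in Y_j$. The LP constraints~\eqref{cstr:decrease} are of this type, since they are imposed for every cell containing a vertex. Because $\{j: x'\in Y_j\}\subseteq\{j: x\in Y_j\}$ for $x'$ near $x$, continuity of $f$ then gives a uniform negative bound on the derivative of $t\mapsto V(x(t))$ near each $y\neq 0$, which is exactly what your contradiction uses.

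Alternatively, you can exclude invariant sets in the skeleton directly. For $n=2$ this works with Assumption~\ref{assumption:equilibrium}: a $C^1$ orbit with nonvanishing velocity contained in finitely many segments moves monotonically along a single line. Its $\alpha$- and $\omega$-limits would then be two distinct equilibria in $\cX$, a contradiction. The paper's own proof passes over this same point with ``standard Lyapunov arguments''.

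A smaller caveat, also shared with the paper, concerns the ``open'' half of your open--closed argument in Step~1. It needs the solution to stay in $\cX$ for a short time after each instant it spends in $\cL^V_\alpha$, and this fails at points of $\cL^V_\alpha\cap\partial\cX$. Since $\cL^V_\alpha$ is defined as a subset of $\cX$, the hypothesis $\cL^V_\alpha\subseteq\cX$ is automatic as written. You should state explicitly that you use it in the form $\cL^V_\alpha\subseteq\mathrm{int}(\cX)$.
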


\begin{proof}
By Proposition~\ref{prop:differential_inclusion}, admissible dynamics satisfy $\dot x \in \bF(x)$, and $V$ is a Lyapunov function for this inclusion on $\cX$. By Lemma~\ref{lemma:levelset}, every trajectory starting in $\cL^V_\alpha$ remains in $\cL^V_\alpha$, since $\cL^V_\alpha \subseteq \cX$ excludes its escape in finite time from $\cX$. Hence, trajectories starting in $\cR\cap \cL^V_\alpha$ remain in $\cL^V_\alpha$. Since $\cR\cap \cL^V_\alpha$ is connected and contains the equilibrium, standard Lyapunov arguments imply convergence toward the equilibrium. Therefore, $\cR\cap \cL^V_\alpha$ defines a certified \ac{RoA}.
\end{proof}

\begin{proposition}\label{prop:gradV}
Let $x\in\cX\setminus\{0\}$. If
\begin{equation*}\label{eq:decrease*}
    \nabla V^\mathsf{T}z<0 \quad \forall z \in \cF_v(x) \tag{\ref{eq:decrease}*}
\end{equation*}
is verified, then~\eqref{eq:decrease} holds.
\end{proposition}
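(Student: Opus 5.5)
The argument rests on two facts: the map $z \mapsto \nabla V^\mathsf{T} z$ is linear in $z$, and the set of admissible velocities at $x$ is the hyperbox $\bF(x)$, which is the convex hull of its vertex set $\cF_v(x)$.

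First, fix $x\in\cX\setminus\{0\}$ and the cell $Y_j$ with $x\in\mathrm{int}(Y_j)$. On this cell $\nabla V(x)=a_j$ is constant, so condition~\eqref{eq:decrease*} reads $a_j^\mathsf{T}z<0$ for every $z\in\cF_v(x)$.

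Second, by Lemma~\ref{lemma:Qkx} and Proposition~\ref{prop:differential_inclusion}, any admissible vector field satisfies
\begin{equation*}
f(x)\in\bF(x)=[f_1^{\min}(x),f_1^{\max}(x)]\times\cdots\times[f_n^{\min}(x),f_n^{\max}(x)].
\end{equation*}
This includes the true $f$, since it is consistent with the data and the Lipschitz bound. A product of compact intervals is the convex hull of its $2^n$ vertices. So there are weights $\lambda_z\ge 0$ with $\sum_{z\in\cF_v(x)}\lambda_z=1$ and $f(x)=\sum_{z\in\cF_v(x)}\lambda_z z$. This is the only step needing justification. I would prove it coordinatewise by writing $f_k(x)=\theta_k f_k^{\max}(x)+(1-\theta_k)f_k^{\min}(x)$ with $\theta_k\in[0,1]$, and taking the product weights $\lambda_z=\prod_k \theta_k^{[z_k=f_k^{\max}]}(1-\theta_k)^{[z_k=f_k^{\min}]}$. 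These weights are nonnegative, sum to $1$ by expanding $\prod_k(\theta_k+(1-\theta_k))$, and reproduce $f(x)$.

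Third, linearity gives
\begin{equation*}
a_j^\mathsf{T}f(x)=\sum_{z\in\cF_v(x)}\lambda_z\, a_j^\mathsf{T}z .
\end{equation*}
This is a convex combination of strictly negative numbers, so it is strictly negative. Equivalently, $a_j^\mathsf{T}f(x)\le\max_{z\in\cF_v(x)}a_j^\mathsf{T}z<0$, because a linear functional attains its maximum over a polytope at a vertex.

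Since $x$ was arbitrary in the interior of an arbitrary cell minus the origin, \eqref{eq:decrease} holds for every admissible $f(\cdot)\in\bF(\cdot)$, in particular for the true dynamics. The degenerate case $f_k^{\min}(x)=f_k^{\max}(x)$ needs no separate treatment: the vertex set simply has repeated points and the argument is unchanged.
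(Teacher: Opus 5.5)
Your proof is correct and follows the same route as the paper. Both arguments use $f(x)\in\bF(x)=\mathrm{conv}\,\cF_v(x)$ together with linearity of $z\mapsto a_j^\mathsf{T}z$ to carry strict negativity from the vertices to $f(x)$; the only difference is that you construct the convex weights explicitly with the product formula, which in the degenerate case $f_k^{\min}(x)=f_k^{\max}(x)$ must be indexed by min/max patterns rather than by points $z$, while the paper simply asserts the convex-hull representation.
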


\begin{proof}
Since $\bF(x)$ is a convex polyhedral set (a hyperbox), any $f(x)\in \bF(x)$ can be expressed as a convex combination of its vertices $\cF_v(x)$. The mapping $z \mapsto \nabla V^\mathsf{T} z$ is linear, and thus its maximum over $\bF(x)$ is attained at the vertices, i.e.
\[ \max_{z \in \bF(x)} \nabla V^\mathsf{T} z = \max_{z \in \cF_v(x)} \nabla V^\mathsf{T} z. \]
If $\nabla V^\mathsf{T} z < 0$ for all $z \in \cF_v(x)$, then $\nabla V^\mathsf{T} z < 0$ for all $z \in \bF(x)$, and since $f(x)  \in  \bF(x)$, it follows that $\nabla V^\mathsf{T} f(x)  <  0$.
\end{proof}

The preceding results show that stability can be certified directly from data by constructing a Lyapunov function $V(.)$ that decreases along all admissible dynamics consistent with the available information. In particular, Proposition~\ref{prop:levelset} shows that the connected component of the sublevel set $\cL^V_\alpha$ containing the equilibrium defines a certified \ac{RoA}.

To make this condition practically verifiable, the key idea behind Proposition~\ref{prop:gradV} is to replace the unknown vector field $f(\cdot)$ in~\eqref{eq:decrease} by a finite set of constraints evaluated at the vertices of the uncertainty set. Given the hyperbox structure of $\bF(x)$, these vertices fully characterize the admissible values of the vector field, so the decrease condition can be enforced through a finite number of inequalities.

\section{An algorithmic construction of certificates}\label{sec:algorithm}

In this section, the uncertainty set, the state-space tessellation, and the Lyapunov conditions derived previously are assembled into an optimization-based procedure. In particular, the finite characterization of the Lyapunov decrease condition provided by Proposition~\ref{prop:gradV} enables a tractable formulation. A feasible solution yields a valid \ac{PWA} Lyapunov candidate and an associated region of attraction. The resulting formulation admits an implementable algorithm proposed in this section.

\subsection{LP-based PWA candidate selection}\label{LP_based}

To construct the \ac{PWA} Lyapunov function~\eqref{eq:Lyapunov}, we formulate an optimization problem using the constraints~\eqref{eq:positivity}--\eqref{eq:decrease}, evaluated at the vertices $\{v_u\}_{u\in [N_v]}$ of the tessellation. The decrease condition~\eqref{eq:decrease} is replaced by its vertex-based counterpart~\eqref{eq:decrease*} using Proposition~\ref{prop:gradV}, by enforcing it at the vertices of the uncertainty set $\cF_v(v_u)$.

To allow for feasibility, decrease condition~\eqref{eq:decrease*} is relaxed using slack variables $\{s_u\}_{u\in[N_v]}$, lower-bounded by $-\mu$ with $\mu\in\R_{>0}$. The slack variables are minimized in the objective to enforce negativity of the Lyapunov decrease condition.

The resulting linear program (\ac{LP}) optimizes over variables $\{a_j\}_{j\in[N_c]}$, $\{b_j\}_{j\in[N_c]}$ and $\{s_u\}_{u\in[N_v]}$, and is formulated as:
\begin{subequations}\label{optimisation_problem}
\begin{align}
\min_{\{a_j,b_j,s_u\}} \;\; & \sum_{u} s_u \label{eq:cost_func} \\
\text{s.t.} \quad
s_u &\geq -\mu && \forall u, \label{cstr:negative_slack} \\
a_j^\mathsf{T} v_u + b_j &\geq 0 && \forall u,\,\forall j:\, v_u \in Y_j, \label{cstr:positivity} \\
a_j^\mathsf{T} v_u + b_j &= a_{j'}^\mathsf{T} v_u + b_{j'} && \forall u,\,\forall j,j': v_u \in Y_j \cap Y_{j'}, \label{cstr:continuity} \\
a_j^\mathsf{T} z &\leq s_u && \forall u,\,\forall j: v_u \in Y_j,\, \forall z \in \cF_v(v_u), \label{cstr:decrease}
\end{align}
\end{subequations}
where $u \in [N_v]$ indexes the tessellation vertices $\{v_u\}$ and $j,j' \in [N_c]$ index the cells $\{Y_j\}$.

For $\alpha\in\R_{>0}$, the connected component of the sublevel set $\cL_\alpha^V$ containing the origin, associated with the obtained Lyapunov function $V^*(x) = a_j^{*\mathsf{T}} x + b_j^*$, defines a certified region of attraction provided that all slack variables associated with vertices $v_u \in \cL_\alpha^V$ are strictly negative.

\subsection{Iterative procedure}

Before solving the optimization problem~\eqref{optimisation_problem}, several feasibility issues must be addressed.

A main obstacle to feasibility of constraint~\eqref{cstr:decrease} arises when the set $\bF(v_u)$ contains zero. Such a vector implies that $v_u$ is a potential equilibrium point, and the corresponding data set can be considered not sufficiently informative. In this case, \eqref{cstr:decrease} must hold for $z=0$:
$$a_j^\mathsf{T}0 \leq s_u \quad \Rightarrow \quad 0 \leq s_u,$$
meaning that the slack variable cannot be negative. Accordingly, the proposed solution is to add a data point at $v_u$ to reduce the uncertainty.

The second issue arises from the equilibrium point itself and the points in its neighborhood, for the same reason as the infeasibility of constraint~\eqref{cstr:decrease} for negative slack variables. This is where Assumption~\ref{assumption:neighbourhood_A} comes in place: the existence of a neighborhood $\cA$ of the equilibrium point, understood as a target region, allows relaxing~\eqref{cstr:decrease} for points in $\cA$.

The third issue stems from numerical sensitivity of polyhedral intersection in the construction of $F_{\mathcal{D}_{N_d}}$. This propagates when computing the sets $\cF_v(v_u)$ at the tessellation vertices $\{v_u\}_{u\in [N_v]}$, potentially leading to inaccuracies that affect the feasibility of \eqref{optimisation_problem}. To address this, $2n$ auxiliary local \ac{LP}s are solved at each vertex $v_u$ to compute the bounds defining $\cF_v(v_u)$ accurately.

We now introduce the main algorithm implementing the proposed framework. The iterative procedure increases in tesselation density in direct relation to the selection of the new data points. Due to space limitations, we do not detail here the specific subroutines handling the double representation of polyhedral sets. These can employ off-the-shelf techniques or, in turn, employ optimization-based techniques to compute extreme realizations of the uncertainties as mentioned above. Also, further numerical refinements may also be considered regarding the tessellation size and its relation to the Lipschitz constants.

\begin{algorithm}[hbtp!]
    \caption{}\label{algorithm}
    \textbf{Input:} $\cX$, $\cA$, $M$, $\mu$\\
    \textbf{Output:} $\cL^{V}_{\alpha}$\\
    Generate initial data set $\cD_{N_d}$\\
    \While{the iteration limit is not reached}
        {Construct the uncertainty set $F_{\mathcal{D}_{N_d}}$\\
        Extract its vertices $\{v_u\}_{u\in [N_v]}$\\
        Compute $\cF_v(v_u)$ for all $u\in[N_v]$\\
        \eIf{$0\in\cF_v(v_u)$ for some $u\in[N_v]$}
            {Add data point at such vertices $v_u$}
            {Construct the tessellation $\{Y_j\}_{j\in [N_c]}$\\
            Solve the optimization problem~\eqref{optimisation_problem}\\
            Extract the sublevel set $\cL^{V}_{\alpha}$\\
            \eIf{all $s_u$ for $v_u\in\cL^{V}_{\alpha}$ are negative}
                {Terminate}
                {Add data points at vertices $v_u$ with $s_u\geq 0$}
            }
        }
\end{algorithm}

\subsection{Alternative PWA Lyapunov constructions}\label{SOCP_based}

The approach proposed in the previous subsection explicitly describes the uncertainty bounds before constructing the Lyapunov function, thereby decoupling the computational burden into two separate stages---uncertainty characterization and LP-based certification. As an alternative, in  \cite{tacchi2025,khattabi2025}, implicit bounds on the admissible values of $f(\cdot)$ are defined indirectly via Lyapunov decrease conditions. We recall here the convex optimization framework in~\cite{khattabi2025}, which builds on a data set $\{x_i,f_i\}_{i\in[N_d]}$ and a given tessellation $\{Y_j\}_{j\in[N_c]}$:
\begin{subequations}\label{eq:optprob}
\begin{align}
\min_{\{\gamma_{i,j},a_j,b_j,s_{k,j}\}} \quad
& \sum_{j} \sum_{k} s_{k,j} \label{eq:obj} \\
\text{s.t.} \quad
s_{k,j} &\geq -\mu
&& \forall j,\, \forall k, \label{eq:cnd1} \\
(a_j-a_{j'})^\top v_{k,j} &= b_{j'}-b_j
&& \forall j,j', \forall k \text{ s.t. } v_{k,j}\in Y_{j'}, \label{eq:cnd2}\\[-0.5ex]
\sum_{i} \gamma_{i,j} &= a_j
&& \forall j, \label{eq:cnd5} \\[-0.2ex]
\sum_{i} (\gamma_{i,j}^\top f_i + \| \gamma_{i,j}\|\,M\,\|v_{k,j} -x_i\| ) &\leq s_{k,j}
&& \forall j,\,\forall k, \label{eq:cnd6}
\end{align}
\end{subequations}
where $i\in[N_d]$ indexes the data points, $j,j'\in[N_c]$ index the cells $\{Y_j\}$, and $k\in[|Y_j|]$ indexes the vertices $\{v_{k,j}\}$ of the cell $Y_j$.

It is important to note that the parametrization of the \ac{PWA} Lyapunov function candidates is constrained through the auxiliary variables $\gamma_{i,j}$ in~\eqref{eq:cnd5}, which introduce additional optimization variables. As a result, the problem~\eqref{eq:optprob} becomes a second-order cone program (SOCP), with  its compactness coming at the expense of higher computational complexity due to the nonlinear constraints~\eqref{eq:cnd6}. 

In terms of comparison, the two approaches can be summarized by the following features:
\begin{itemize}
    \item A two-stage (separable and thus interpretable) certification for the LP-based construction in Section \ref{LP_based};
    \item a one-shot PWA certificate in Section
    \ref{SOCP_based} based on a monolithic SOCP formulation.  
\end{itemize}
As commonalities, both approaches depend on the choice of tessellation and can be iteratively updated by enriching the seed of data points. The interested reader is referred to \cite{khattabi2025} for convergence results and computational complexity of the SOCP approach. In the remainder of the paper, we focus on illustrating the LP-based approach.

\section{Numerical examples}\label{sec:numerical_examples}

In this section, we apply Algorithm~\ref{algorithm} to two numerical examples: a simple damped pendulum and an inverted-time Van der Pol oscillator. To avoid lifting issues and simplify the enforcement of constraints, triangular cells are adopted, and the tessellation is obtained via Delaunay triangulation. For the chosen initial data set, we take $N_d = 3$ with two random data points, the third being the equilibrium in the origin. The  simulations\footnote{Code is available at: \hyperlink{https://github.com/OumaymaK/LP_PWA_Lyapunov.git}{\texttt{github.com/OumaymaK/LP\_PWA\_Lyapunov}}} were performed on a laptop with an AMD Ryzen 7 7735U CPU and 32GB of RAM; MATLAB-based Multi-Parametric Toolbox  (MPT3)~\cite{MPT3},  YALMIP~\cite{lofberg2004}, and MOSEK~11.1~\cite{mosek} were used for the polyhedral operations, problem formulation, and optimization, respectively.

\subsection{Simple damped pendulum}

We consider the following system dynamics corresponding to a damped pendulum (see Fig.~\ref{fig:damped_pendulum_dynamics}):
\begin{equation*}
    \dot{x} = f(x) = \begin{pmatrix} x_2 \\ -\sin(x_1) - 2x_2 \end{pmatrix},
    \label{eq:pendulum}
\end{equation*}
with $x\in\cX = [-1,1]^2$, $\cA = [-0.1,0.1]^2$, and an upper bound on the Lipschitz constant $M = [1.15,3.15]^\mathsf{T}$.
\begin{figure}[htbp!]
    \centering
    \includegraphics[width = .65\linewidth]{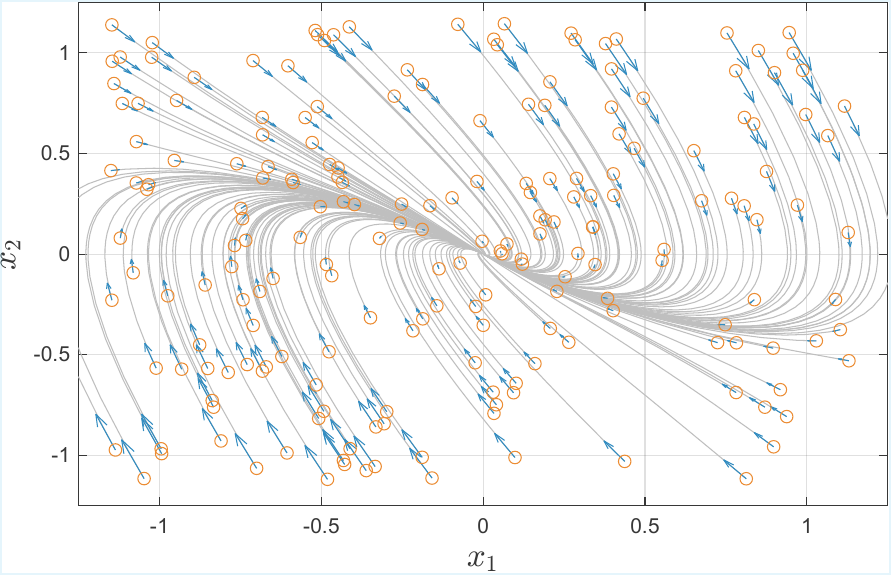}
    \caption{Dynamics of a simple damped pendulum.}
    \label{fig:damped_pendulum_dynamics}
\end{figure}

The resulting Lyapunov candidate, shown in Figure~\ref{fig:damped_pendulum_Lyapunov}, was obtained in 4 iterations with a total computation time of 472$\,$s. The certified \ac{RoA} is delimited by the red boundary and corresponds to the sublevel set \(\cL^V_{217.08}\) of the Lyapunov candidate. The contour curves match the shape of the true \ac{RoA} of the system dynamics. The dashed black lines depict the boundary of $\cX$. 
\begin{figure}[htbp!]
    \centering
    \includegraphics[width = .7\linewidth]{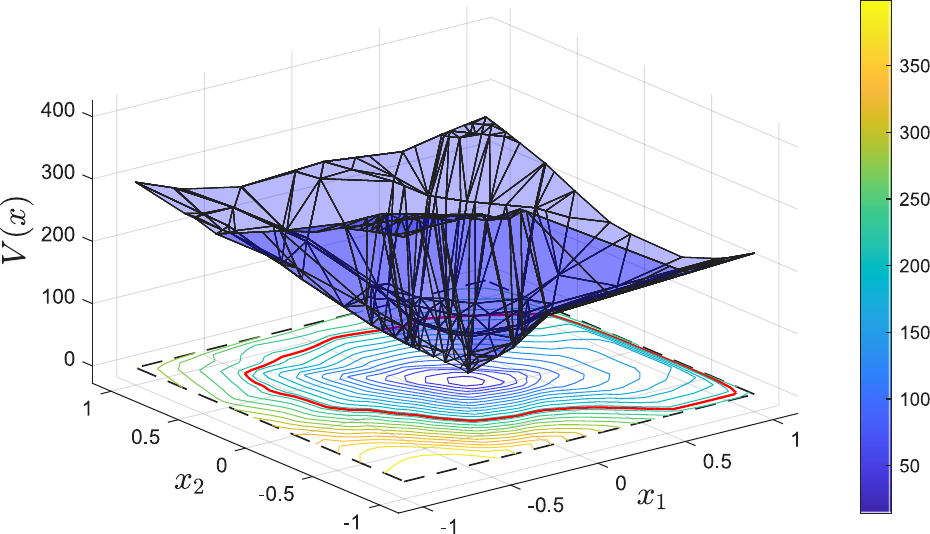}
    \caption{Lyapunov candidate and RoA for the damped pendulum.}
    \label{fig:damped_pendulum_Lyapunov}
\end{figure}

For a clearer analysis of the algorithm, selected performance metrics are reported in Table~\ref{tab:damped_pendulum}. Here, $N_d$ is the number of data points, $N_c$ the number of tessellation cells, $N_v$ the number of vertices, $T_\mathrm{data}$ the data processing time, $T_\mathrm{con}$ the constraint construction time, and $T_\mathrm{opt}$ the optimization time.
\begin{table}[htbp!]
    \centering
    \caption{Performance metrics for the damped pendulum example.}
    \begin{tabular}{ccccccc}
        \toprule
        Iteration & $N_d$ & $N_c$ & $N_v$ & $T_\mathrm{data}\,$[s] & $T_\mathrm{con}\,$[s] & $T_\mathrm{opt}\,$[s] \\
        \midrule
        1 & 3 & 78 & 41 & 19.3 & -- & -- \\
        2 & 33 & 600 & 262 & 122.1 & 11.5 & 2.1 \\
        3 & 87 & 619 & 271 & 128.7 & 11.0 & 2.0 \\
        4 & 147 & 735 & 321 & 158.1 & 15.1 & 2.1 \\
        \bottomrule
    \end{tabular}
    \label{tab:damped_pendulum}
\end{table}

According to Table~\ref{tab:damped_pendulum}, the optimization is first called at iteration 2 (after satisfying the necessary condition on the informativity of the data) and  ensures negative slack variables over the entire set by iteration 4. The number of data points is capped at 147. The most time-consuming step is data processing to extract $\cF_v(v_u)$ ($2n$ auxiliary \ac{LP}s are solved for each $v_u$), while the time to solve the LP~\eqref{optimisation_problem} is negligible.

\subsection{Inverted-time Van der Pol oscillator}

The inverted-time Van der Pol oscillator under study is described by the system dynamics (see Fig.~\ref{fig:Van_der_Pol_dynamics}):
\begin{equation*}
    \dot{x} = f(x) = \begin{pmatrix} -2x_2 \\ -10x_2(0.21-x_1^2) + 0.25x_2^2 \end{pmatrix},
    \label{eq:Van_der_Pol}
\end{equation*}
with $x\in\cX = [-0.5,0.5]^2$, $\cA = [-0.05,0.05]^2$, and an upper bound on the Lipschitz constant $M = [2.15,6.35]^\mathsf{T}$.
\begin{figure}[htbp!]
    \centering
    \includegraphics[width = .65\linewidth]{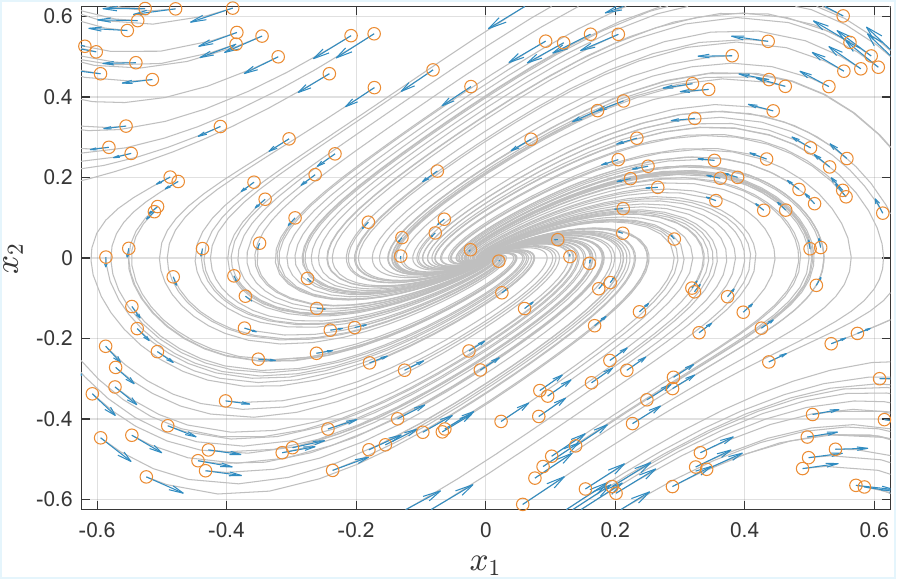}
    \caption{Dynamics of an inverted-time Van der Pol oscillator.}
    \label{fig:Van_der_Pol_dynamics}
\end{figure}

The simulation took 937.4$\,$s to converge. Figure~\ref{fig:Van_der_Pol_Lyapunov} shows the obtained Lyapunov candidate. In this case, the sublevel set $\cL^V_{63.16}$, depicted in red, is one connected set containing the equilibrium, and it defines the certified \ac{RoA}.
\begin{figure}[htbp!]
    \centering
    \includegraphics[width = .7\linewidth]{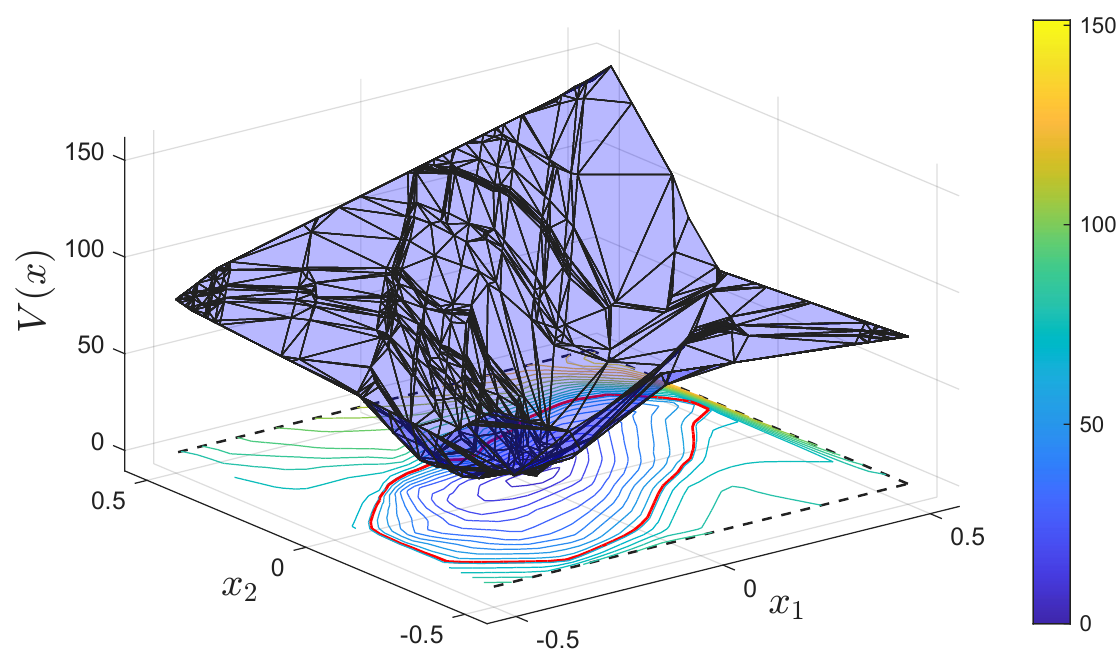}
    \caption{Lyapunov candidate and RoA for the Van der Pol oscillator.}
    \label{fig:Van_der_Pol_Lyapunov}
\end{figure}

As seen in Table~\ref{tab:Van_der_Pol}, only three iterations were required to obtain a result for the inverse-time Van der Pol oscillator, albeit with more time and data. However, non-negative slack variables persist outside the certified \ac{RoA}, indicating that the Lyapunov decrease condition is not satisfied globally, in contrast to the previous example, where all slack variables are negative over the entire domain.
\begin{table}[htbp!]
    \centering
    \caption{Performance metrics for the Van der Pol example.}
    \begin{tabular}{ccccccc}
        \toprule
        Iteration & $N_d$ & $N_c$ & $N_v$ & $T_\mathrm{data}\,$[s] & $T_\mathrm{con}\,$[s] & $T_\mathrm{opt}\,$[s] \\
        \midrule
        1 & 3 & 104 & 57 & 25.4 & -- & -- \\
        2 & 41 & 1560 & 788 & 296.5 & 33.4 & 7.4\\
        3 & 370 & 1684 & 852 & 303.3 & 35.8 & 6.9\\
        \bottomrule
    \end{tabular}
    \label{tab:Van_der_Pol}
\end{table}

\section{Conclusion}\label{sec:conclusion}
This paper proposes a data-driven framework for constructing PWA Lyapunov functions and approximating the region of attraction for nonlinear systems with unknown dynamics. The approach relies on the explicit construction of a polyhedral uncertainty set from data, which induces a differential inclusion describing all admissible dynamics. This representation enables a tractable LP formulation for enforcing Lyapunov conditions over the uncertainty set.

A key feature of the proposed method is the separation of uncertainty characterization from Lyapunov certification, thereby improving interpretability and enabling iterative refinement via data enrichment. The resulting PWA Lyapunov candidates provide certified regions of attraction, even when the available data is sparse. The comparison with an alternative SOCP formulation highlights the trade-off between computational complexity and structural transparency.

Future work includes improving scalability and refining the state-space tessellation to derive sufficient conditions for its construction.

\bibliographystyle{ieeetr}
\bibliography{RefCDC26.bib}

@article{bramburger2024,
  title = {Auxiliary Functions as {K}oopman Observables: {D}ata-Driven Analysis of Dynamical Systems via Polynomial Optimization},
  author = {Bramburger, Jason J. and Fantuzzi, Giovanni},
  year = 2024,
  month = feb,
  journal = {J. Nonlinear Sci.},
  volume = {34},
  number = {1}
}

@article{brayton1980,
  title = {Constructive Stability and Asymptotic Stability of Dynamical Systems},
  author = {Brayton, R. and Tong, C.},
  year = 1980,
  journal = {IEEE Trans. Circuits Syst.},
  volume = {27},
  number = {11},
  pages = {1121--1130}
}

@article{brayton1979,
  title = {Stability of Dynamical Systems: {{A}} Constructive Approach},
  author = {Brayton, R. and Tong, C.},
  year = 1979,
  journal = {IEEE Trans. Circuits Syst.},
  volume = {26},
  number = {4},
  pages = {224--234}
}

@article{grune2021,
author = {Lars Grüne},
title = {Computing {L}yapunov functions using deep neural networks},
journal = {J. Comput. Dyn.},
volume = {8},
number = {2},
pages = {131--152},
year = {2021}
}

@inproceedings{MPT3,
  title = {Multi-{{Parametric Toolbox}} 3.0},
  booktitle = {Proc. Eur. Control Conf.},
  author = {Herceg, Martin and Kvasnica, Michal and Jones, Colin N. and Morari, Manfred},
  year = 2013,
  pages = {502--510}
}

@article{hmamed1994,
  title = {Comments on "{{Vector}} Norms as {{Lyapunov}} Functions for Linear Systems" [with Reply]},
  author = {Hmamed, A. and Kiendl, H. and Adamy, J. and Stelzner, P.},
  year = 1994,
  journal = {IEEE Trans. Autom. Control},
  volume = {39},
  number = {12},
  pages = {2522--2523}
}

@article{su1994,
  title = {Comments on "{{Stability}} Margin Evaluation for Uncertain Linear Systems"},
  author = {Su, Juing-Huei},
  year = 1994,
  journal = {IEEE Trans. Autom. Control},
  volume = {39},
  number = {12},
  pages = {2523--2524}
}

@article{julian1999,
  title = {A Parametrization of Piecewise Linear {{Lyapunov}} Functions via Linear Programming},
  author = {Julian, Pedro and Guivant, Jose and Desages, Alfredo},
  year = 1999,
  journal = {Int. J. Control},
  volume = {72},
  number = {7-8},
  pages = {702--715}
}

@article{kiendl1992,
  title = {Vector Norms as {{Lyapunov}} Functions for Linear Systems},
  author = {Kiendl, H. and Adamy, J. and Stelzner, P.},
  year = 1992,
  journal = {IEEE Trans. Autom. Control},
  volume = {37},
  number = {6},
  pages = {839--842}
}

@inproceedings{kim2024,
  title = {Estimation of constraint admissible invariant set with neural {L}yapunov function},
  booktitle = {Proc. IEEE CDC},
  author = {Kim, Dabin and Kim, H. Jin},
  year = 2024,
  pages = {5032--5039},
  address = {Milan, Italy}
}

@article{kolter2019,
  title={Learning stable deep dynamics models},
  author={Kolter, J Zico and Manek, Gaurav},
  journal={Adv. Neural Inf. Process. Syst.},
  year={2019}
}

@article{loskot1998,
  title = {Further comments on "{{Vector}} norms as {{Lyapunov}} functions for linear systems"},
  author = {Loskot, K. and Polanski, A. and Rudnicki, R.},
  year = 1998,
  journal = {IEEE Trans. Autom. Control},
  volume = {43},
  number = {2},
  pages = {289--291}
}

@article{lyapunov1992,
  title = {The General Problem of the Stability of Motion},
  author = {Lyapunov, Aleksandr Mikhailovich},
  year = 1992,
  journal = {Int. J. Control},
  volume = {55},
  number = {3},
  pages = {531--534}
}

@article{martin2024,
  title = {Data-Driven System Analysis of Nonlinear Systems Using Polynomial Approximation},
  author = {Martin, Tim and Allg{\"o}wer, Frank},
  year = 2024,
  journal = {IEEE Trans. Autom. Control},
  volume = {69},
  number = {7},
  pages = {4261--4274}
}

@article{miller2024,
  title = {Data-Driven Superstabilizing Control Under Quadratically-Bounded Errors-in-Variables Noise},
  author = {Miller, Jared and Dai, Tianyu and Sznaier, Mario},
  year = 2024,
  journal = {IEEE Control Syst. Lett.},
  volume = {8},
  pages = {1655--1660}
}

@misc{mosek,
  title = {The {{MOSEK}} Optimization Toolbox for {{MATLAB}} Manual. {{Version}} 11.1.10.},
  author = {MOSEK ApS},
  year = 2026
}

@misc{nagumo1942,
  title = {{\"U}ber Die {{Lage}} Der {{Integralkurven}} Gew\"ohnlicher {{Differentialgleichungen}}},
  author = {Nagumo, Mitio},
  year = 1942
}

@article{ohta1993,
  title = {Computer Generated {{Lyapunov}} Functions for a Class of Nonlinear Systems},
  author = {Ohta, Y. and Imanishi, H. and Gong, L. and Haneda, H.},
  year = 1993,
  journal = {IEEE Trans. Circuits Syst. I},
  volume = {40},
  number = {5},
  pages = {343--354}
}

@article{tacchi2025,
  title = {Robustly Learning Regions of Attraction From Fixed Data},
  author = {Tacchi, Matteo and Lian, Yingzhao and Jones, Colin N.},
  year = 2025,
  journal = {IEEE Trans. Autom. Control},
  volume = {70},
  number = {3},
  pages = {1576--1591}
}

@article{wang2024,
  title = {Actor--{{Critic Physics-Informed Neural Lyapunov Control}}},
  author = {Wang, Jiarui and Fazlyab, Mahyar},
  year = 2024,
  journal = {IEEE Control Syst. Lett.},
  volume = {8},
  pages = {1751--1756}
}

@article{zheng2024,
  title = {Data-Driven Safe Control of Discrete-Time Non-Linear Systems},
  author = {Zheng, Jian and Miller, Jared and Sznaier, Mario},
  year = 2024,
  journal = {IEEE Control Syst. Lett.},
  volume = {8},
  pages = {1553--1558}
}

@inproceedings{lofberg2004,
    address = {Taipei, Taiwan},
    author = {L{\"{o}}fberg, J.},
    booktitle = {Proc. CACSD},
    title = {{YALMIP}: {A} Toolbox for Modeling and Optimization in {MATLAB}},
    year = {2004}
}

@article{gong1994,
  title={Stability margin evaluation for uncertain linear systems},
  author={Gong, C and Thompson, S},
  journal={IEEE Trans. Autom. Control},
  volume={39},
  number={3},
  pages={548--550},
  year={1994}
}

@article{kamalapurkar2020,
  title = {On reduction of differential inclusions and {L}yapunov stability},
  author = {Kamalapurkar, Rushikesh and Dixon, Warren E and Teel, Andrew R},
  journal = {ESAIM Control Op. Ca. Va.},
  volume = {26},
  pages = {24},
  year = {2020}
}

@article{dacs2023,
  title = {Robust control barrier functions with uncertainty estimation},
  author = {Da{\c{s}}, Ersin and Wei, Skylar X and Burdick, Joel W},
  journal = {arXiv:2304.08538},
  year = {2023}
}

@inproceedings{khattabi2025,
  title = {Sequentially learning regions of attraction from data},
  author = {Khattabi, Oumayma and Tacchi, Matteo and Olaru, Sorin},
  booktitle = {Proc. IEEE Mediterr. Control Autom.},
  pages = {589--594},
  year = {2025}
}

\end{document}